\def\longtitle{A Safe Computational Framework for Integer Programming applied to Chv\'atal's Conjecture}
\def\shortfunding{The work for this article has been conducted within the
  Research Campus MODAL funded by the German Federal Ministry of Education and
  Research (BMBF grant numbers 05M14ZAM, 05M20ZBM).}
\newtheorem{theorem}{Theorem}
\newtheorem{corollary}{Corollary}
\newtheorem{proposition}{Proposition}
\newtheorem{conjecture}{Conjecture}
\newtheorem{observation}{Observation}
\patchcmd{\ttlh@hang}{\parindent\z@}{\parindent\z@\leavevmode}{}{}
\patchcmd{\ttlh@hang}{\noindent}{}{}{}
\titleformat{\paragraph}[runin]
{\normalfont\normalsize\itshape}{\theparagraph}{1em}{}
\titleformat{\subparagraph}[runin]
{\normalfont\normalsize\itshape}{\theparagraph}{1em}{}
\titlespacing*{\section}     {0pt}{21dd plus 8pt minus 4pt}{10.5dd}
\titlespacing*{\subsection}   {0pt}{21dd plus 8pt minus 4pt}{10.5dd}
\titlespacing*{\subsubsection}{0pt}{19dd plus 8pt minus 4pt}{10.5dd}
\titlespacing*{\paragraph}   {0pt}{13pt plus 8pt minus 4pt}{1em}
\titlespacing*{\subparagraph}   {0pt}{13pt plus 8pt minus 4pt}{1em}
\newcommand{\ie}{i.e.,\xspace}
\newcommand{\wlogU}{W.l.o.g.\ }
\newcommand{\wlogL}{w.l.o.g.\ }
\newcommand{\bandb}{branch-and-bound\xspace}
\newcommand{\Abs}[1]{\vert #1\vert}
\newcommand{\incset}{\mathcal{S}}
\newcommand{\floor}[1]{\lfloor #1\rfloor}
\newcolumntype{R}{>{\collectcell\ApplyColor}{r}<{\endcollectcell}}
\newcommand{\N}{\mathbb{N}\xspace}
\newcommand{\reduction}[2]{\pgfmathparse{(#1)/#2 }{\pgfmathprintnumber[textnumber]{\pgfmathresult}}}
\newcommand{\colorquot}[2]{
	\pgfmathparse{(#1<0.9*#2)?1:0}\ifdim\pgfmathresult pt>0pt \textcolor{blue}{$\mathbf{\reduction{#1}{#2}}$}\else
	\pgfmathparse{(#1>1.1*#2)?1:0}\ifdim\pgfmathresult pt>0pt \textcolor{red}{$\mathbf{\reduction{#1}{#2}}$} \else
	$\reduction{#1}{#2}$\fi \fi
}
\newcommand{\optprob}[1]{$P_{\text{\itshape opt}}(#1)$}
\newcommand{\infprob}[1]{$P_{\text{\itshape inf}\,}(#1)$}
\newcommand{\redprob}[1]{P_{\text{\itshape red}}(#1)}
\newcolumntype{$}{>{\global\let\currentrowstyle\relax}}
\newcolumntype{^}{>{\currentrowstyle}}
\begin{document}

% decrease width to improve readability
%\newgeometry{left=37mm,right=37mm,top=35mm}

\title{\Large\longtitle\footnote{\shortfunding}\bigskip}
\author[1]{Leon Eifler }
\author[1]{Ambros Gleixner}
\author[1,2]{Jonad Pulaj}
\affil[1]{Department of Mathematical Optimization, Zuse Institute Berlin (ZIB),
Berlin, Germany, {\tt \{eifler,gleixner,pulaj\}@zib.de}}
\affil[2]{Department of Mathematics and Computer Science, Davidson College, Davidson, NC \tt jonad.pulaj@cavehill.uwi.edu}
\setcounter{Maxaffil}{0}
\renewcommand\Affilfont{\itshape\small}
\maketitle

 \begin{abstract}
   We describe a general and safe computational framework that provides
   integer programming results with the degree of certainty that is required for
   machine-assisted proofs of mathematical theorems.
   At its core, the framework relies on a rational branch-and-bound certificate
   produced by an exact integer programming solver, SCIP, in order to circumvent
   floating-point roundoff errors present in most state-of-the-art solvers for
   mixed-integer programs.
   The resulting certificates are self-contained and checker software exists that
   can verify their correctness independently of the integer programming solver
   used to produce the certificate.
   This acts as a safeguard against programming errors that may be present in
   complex solver software.
   The viability of this approach is tested by applying it to finite cases of
   Chvátal's conjecture, a long-standing open question in extremal combinatorics.
   We take particular care to verify also the correctness of the input for this
   specific problem, using the Coq formal proof assistant.
   As a result we are able to provide a first machine-assisted proof that
   Chvátal's conjecture holds for all downsets whose union of sets contains seven
   elements or less.
 \end{abstract}

%\smallskip
%cspell:enable

%\blfootnote{\shortfunding}

\section{Introduction}\label{intro}

The work on algorithms and software for mathematical optimization is often
motivated by the solution of real-world applications.
This sometimes overshadows the value that these methods can have for answering
questions in mathematics itself.
Some of them can quite naturally be cast in the form of optimization problems.
Prominent examples are the extensive use of linear programming to settle
Kepler's conjecture~\cite{hales2017} or the use of semidefinite programming in
discrete geometry~\cite{BachocVallentin2006}.
In addition, first attempts at using integer programming have been made to
settle open questions in extremal combinatorics~\cite{PulajThesis}, graph theory \cite{LanciaEtAl2020},
and graph pebbling \cite{KenterEtAl2018}.

The use of integer programming (IP) for constructing rigorous mathematical
proofs, however, is faced with two main computational difficulties.
First, virtually all state-of-the-art IP solvers rely on fast floating-point
arithmetic, hence their results are compromised by roundoff errors.
Second, most solvers do not provide sufficient output that would allow to check
and verify the correctness of their result.
These limitations are unfortunate given that the presence of integer variables
allows for expressive models and that solvers for integer programming problems
have strongly increased in computational power over the last years~\cite{AchterbergWunderling2013}.

In marked contrast, satisfiability solving (SAT) has been employed with
considerable success to answer open questions in discrete mathematics.
A recent milestone is the solution of the boolean Pythagorean triples
problem~\cite{heule2016solving}.
Satisfiability solving and integer programming share the theoretical difficulty
that compact certificates are in general not available since both SAT and IP are
not known to be in co-$\mathcal{NP}$.
However, over the last years the SAT community has established standards and
tools for proof logging and solver-independent verification of
results~\cite{WetzlerHeuleHunt2014}.

In comparison, exact IP software with verifiable results is still in its
infancy.
Besides domain-specific work such as for the traveling salesman solver
Concorde~\cite{applegate2006concorde} or partial functionality within software libraries targeted towards polyhedral analysis~\cite{Polymake2017,BagnaraHZ08SCP}, the only exact, general IP solver we are aware of
and whose results are not compromised by floating-point errors is an extension
of the solver SCIP~\cite{CookKochSteffyetal.2013}.
Exact SCIP has recently been further extended by the possibility to print
certificates that can be verified independently from the solution
process~\cite{VIPR}.
The goal of this paper is to demonstrate how these tools can be employed to
create a \emph{safe} computational framework for investigating a particular
mathematical application by integer programming.
To this end, we couple
\begin{enumerate}
\item an exact rational IP \emph{solver}, SCIP~\cite{CookKochSteffyetal.2013}, with
\item IP certificates for its branch-and-bound tree \emph{output} that can be
  verified independently from the solution process by the checker
  VIPR~\cite{VIPR},
\item and verification procedures for the correctness of the \emph{input}
  implemented in a formal proof assistant, Coq~\cite{coq2018}.
\end{enumerate}
Notably, this framework features the combined use of an exact IP certificate and
a formal proof assistant to ensure the correctness of the certificate's input
data. To the best of our knowledge, this has not been explored in the literature
before.

We apply this framework to Chvátal's conjecture, which is a well-known open
problem in extremal set theory dating back to 1974 and contained in Erd{\H{o}}s's
list of favorite combinatorial problems~\cite{erdHos1981combinatorial}.
Using our framework, we obtain machine-assisted proofs for low-dimensional cases
that were previously unknown.

The rest of this paper is organized as follows.
Section~\ref{masIP} outlines the general methodology for using exact rational
integer programming together with input/output verification for machine-assisted
theorem proving.
Section~\ref{IP} describes the IP formulations that we use to model Chvátal's
conjecture and presents valid inequalities for the underlying polytopes and
``cuts'' from the literature that reduce the number of integral solutions to the
IP formulations.
Section~\ref{experimentalResults}
contains a detailed description of our experimental results and
Section~\ref{conclusion} concludes with an outlook on future work.
The implementation and results are freely available to the
public~\cite{EiflerGit}.

\section{Verifiable Proofs for Integer Programming Results}\label{masIP}

In the following, we outline our computational methodology used to solve the
integer programs presented in Section~\ref{IP} such that the results can be
trusted and both input and output can be verified independently of the IP solver
used.
Figure \ref{fig:scheme} illustrates the four components.

%cspell:disable
\usetikzlibrary{shapes.geometric,backgrounds,calc,arrows,positioning-plus,node-families}
\tikzset{
  basic box/.style = {
    minimum height = 3.5cm,
    minimum width = 3.5cm,
    shape = rectangle,
    align = center,
    draw  = #1,
    fill  = #1!25,
    rounded corners},
  header node/.style = {
    Minimum Width = header nodes,
    font          = \strut\large\ttfamily,
    text depth    = +0pt,
    fill          = white,
    draw},
  header/.style = {%
    inner ysep = +1.5em,
    append after command = {
      \pgfextra{\let\TikZlastnode\tikzlastnode}
      node [header node] (header-\TikZlastnode) at (\TikZlastnode.north) {#1}
      node [span = (\TikZlastnode)(header-\TikZlastnode)]
        at (fit bounding box) (h-\TikZlastnode) {}
    }
  },
  hv/.style = {to path = {-|(\tikztotarget)\tikztonodes}},
  vh/.style = {to path = {|-(\tikztotarget)\tikztonodes}},
  fat blue line/.style = {ultra thick, blue}
}

\begin{figure}[ht]
\centering
  \scalebox{.8}{
    \begin{tikzpicture}[node distance = 0.5cm, thick, nodes = {align = center},
      >=latex]

     % Place nodes
     % Verification
    \node[Minimum Width = 4cm, fill = white, rounded corners] (vip)
    {VIPR to verify \\ branch-and-bound};

    \node[Minimum Width = 4cm, fill = white, below = of vip, rounded corners] (coq)
    {Data checker to\\ verify input};

    \begin{scope}[on background layer]
      \node[fit = (coq)(vip), basic box = blue,
      header = Verification] (verify) {};
    \end{scope}

    % Solving
    \node[Minimum Width = 4cm, fill = white, rounded corners, , left =  1.5cm  of verify] (sol)
    {Solve IP with exact \\ SCIP};
    \begin{scope}[on background layer]
      \node[fit = (sol), basic box = blue,
      header = Solving] (solb) {};
    \end{scope}

    % Modeling
    \node[Minimum Width = 4cm, fill = white, rounded corners, , left =  1.5cm  of solb] (mod)
    {Model problem \\ as integer program};
    \begin{scope}[on background layer]
      \node[fit = (mod), basic box = blue,
      header = Modeling] (modb) {};
    \end{scope}

    % Draw arrows
  \path[fat blue line, ->](modb.east) edge (solb.west);

  \path[fat blue line, ->, transform canvas={yshift=2.1em}](solb.east) edge (verify.west);
  \path[fat blue line, ->, transform canvas={yshift=-2.1em}](solb.east) edge (verify.west);

\end{tikzpicture}
}
\caption{General framework for modeling, solving, and verifying the results of integer programs.}
  \label{fig:scheme}
\end{figure}
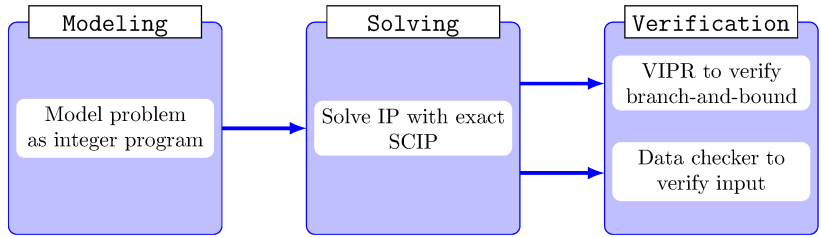
%%% Local Variables:
%%% mode: latex
%%% TeX-master: "chvatal"
%%% End:

\paragraph{Modeling.}
As the first step, we use the modeling language ZIMPL~\cite{Koch2004} to
formulate the integer program.
ZIMPL employs exact rational arithmetic when instantiating the model in order to
ensure that no roundoff errors are introduced before passing the model to a
solver.

\paragraph{Solving.}
Next, we solve the IP using the exact rational variant of the MIP solver
SCIP~\cite{CookKochSteffyetal.2013}.
Exact SCIP implements a hybrid branch-and-bound algorithm that combines
floating-point and exact rational arithmetic in a safe manner.
Several methods are used in order to obtain safe dual bounds by correcting
relaxation solutions from fast floating-point linear programming (LP) solvers.
An exact rational LP solver, QSopt\_ex~\cite{APPLEGATE2007}, is used as
sparingly as possible.
Although exact SCIP still lacks many more sophisticated techniques implemented
in state-of-the-art floating-point solvers such as presolving reductions,
cutting planes, or symmetry handling, its design helps to yield superior
performance compared to a na\"ive branch-and-bound method solely relying on
rational LP solves.

\paragraph{Output Verification.}
Although exact SCIP is designed to provide safe results, the correctness of the
algorithm and implementation cannot easily be verified externally.
To address this issue, we use VIPR~\cite{VIPR}, a recently developed certificate
format that consists of the problem definition followed by an encoding of the
\bandb proof as a list of valid inequalities.
It rests on three simple inference steps that allow for elementary, stepwise
verification: aggregation of inequalities, rounding of right-hand sides, and
resolution of a binary disjunction.
In this sense, a VIPR certificate can, in theory, be checked by hand, although in
practice this may be prohibitive for larger certificates.
Hence, the VIPR project comes with an automatic, standalone checker, but the
simplicity of the format allows for the implementation of alternative checkers.

Exact SCIP can be configured to generate VIPR certificates during the solving
process such that its result must not be trusted blindly.
Its correctness can be verified completely independently of the solving process.

% \begin{itemize}
% \item Stress independence, self-containment
% \item reference github/paper, availability
% \item has bb structure
% \end{itemize}

\paragraph{Input Verification.}

VIPR verification only ensures the correctness of the \bandb certificate with
respect to the integer program encoded in the problem section of the certificate
file.
However, due to implementation errors, the problem section of the certificate
file may actually not match the integer program of interest.
Therefore we implemented a safe input-checker that internally creates
its own representation of the constraint matrix for Problem $\redprob{n}$. It then
reads the problem section of the certificate file and checks if the
two constraint matrices coincide.
This input checker is written using the Coq proof assistant
\cite{coq2018}, a mathematical proof management system.
The matrix creation in this input checker is problem-specific, nevertheless it
can easily be adapted to formulations for similar problems.

%% Ambros: would not write that, might only make reviewers critical because VIPR is not a formal proof
%% We consider this as a first step in the direction of safe verification
%% of VIPR certificates, \ie using code that was proven to be correct with a
%% formal proof management system.

% \begin{itemize}
% \item can be short
% \item explain why and basic concept
% \item reference github, can be easily manipulated
% \item put in scheme
% \end{itemize}

All in all, we are confident that this framework ensures a high level of trust
in the computational proof of Theorem~\ref{7result}.
All tools are made publicly available for review~\cite{EiflerGit}, including the certificate
files for the computational results presented in
Section~\ref{experimentalResults}.

\section{A Polyhedral Approach to Chv\'{a}tal's Conjecture}\label{IP}

% \begin{itemize}
% \item Put the result in as a theorem
% \item finish literature review
% \end{itemize}

Chvátal's conjecture is a well-known open problem in extremal set
theory from 1974, later earning a spot among Erd{\H{o}}s' favorite
combinatorial problems~\cite{erdHos1981combinatorial}. Despite its
popularity, research efforts have yielded limited progress, mostly
restricted to special cases and related variants of the original
conjecture. Before continuing in more detail we need the following
definitions.

Let $[n]:=\left\{1,2, \ldots,n\right\}$. A \emph{family} $\mathcal{F}$
is a set of subsets of $[n]$. Let $U(\mathcal{F})$ denote the union of
all sets in $\mathcal{F}$. A family $\mathcal{F}$ is a \emph{downset}
if and only if $A \in \mathcal{F}$ and $B \subseteq A$ implies
$B \in \mathcal{F} $. If $\mathcal{F}$ is a downset then
$F \in \mathcal{F}$ is a \emph{base} if and only if no strict
supersets of $F$ are contained, \ie $F\subseteq D$ and
$D \in \mathcal{F}$ implies $F=D$.  A family $\mathcal{F}$ is called
\emph{intersecting} if and only if the intersection of any pairwise
sets in $\mathcal{F}$ is nonempty.  A family $\mathcal{F}$ is a
\emph{star} if and only there exists an element in $U(\mathcal{F})$
contained in all sets of $\mathcal{F}$.  A family $\mathcal{F}$ has
the \emph{star property} if and only if some maximum-sized
intersecting family in $\mathcal{F}$ is a star.  We are ready to state
Chvátal's conjecture as follows:

\begin{conjecture}[Chvátal~\cite{chvatal1974intersecting}]\label{chvatalsconj}
  Every downset has the star property.
\end{conjecture}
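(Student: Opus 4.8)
The plan is to prove the general statement by reducing an arbitrary downset to a canonical \emph{left-compressed} form via shifting operators and then establishing the star property for that canonical form. Given a downset $\mathcal{F}$ on $[n]$ and indices $i<j$, I would define the compression $C_{ij}$ that, for every $A\in\mathcal{F}$ with $j\in A$ and $i\notin A$, replaces $A$ by $(A\setminus\{j\})\cup\{i\}$ whenever the latter does not already lie in $\mathcal{F}$, and otherwise leaves $A$ unchanged. The first step is to verify that $C_{ij}(\mathcal{F})$ is again a downset and has the same cardinality (the map is a bijection on $\mathcal{F}$), so that iterating the $C_{ij}$ over all pairs terminates in a fully compressed downset $\mathcal{F}^{\ast}$ in which membership is monotone under lowering elements.

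The second ingredient is a \emph{transfer lemma}: the maximum size of an intersecting subfamily should be non-increasing under each $C_{ij}$. Concretely, given a maximum intersecting subfamily $\mathcal{G}\subseteq\mathcal{F}$, I would push it through $C_{ij}$ and argue that the images can be chosen to remain pairwise intersecting and pairwise distinct, so that the compressed family admits an intersecting subfamily of the same size. Granting this lemma, it suffices to prove the conjecture for $\mathcal{F}^{\ast}$: if $\mathcal{F}^{\ast}$ carries a maximum intersecting star, then pulling that star back through the successive compressions, combined with a local-exchange argument, should yield a maximum intersecting family of $\mathcal{F}$ that may be taken to be a star centered on element $1$.

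For the canonical family $\mathcal{F}^{\ast}$ I would argue by induction on $n$ using the deletion--link decomposition $\mathcal{F}^{\ast}=\mathcal{F}_0\cup\{A\cup\{n\}:A\in\mathcal{F}_1\}$, where $\mathcal{F}_0=\{A\in\mathcal{F}^{\ast}:n\notin A\}$ and $\mathcal{F}_1=\{A\setminus\{n\}:n\in A\in\mathcal{F}^{\ast}\}$ are downsets on $[n-1]$ satisfying $\mathcal{F}_1\subseteq\mathcal{F}_0$. By induction each of $\mathcal{F}_0$ and $\mathcal{F}_1$ carries a maximum intersecting star, and left-compression forces element $1$ to be the most popular element, so the two stars should share the center $1$; recombining them is intended to produce a maximum intersecting star of $\mathcal{F}^{\ast}$.

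The hard part---and the reason the statement has resisted proof since 1974---is exactly the transfer lemma together with the recombination step. Intersecting subfamilies do not interact cleanly with compression: two distinct sets of $\mathcal{G}$ may collide to a common image, or a pair may lose its nonempty intersection once an element is lowered, so bounding the loss and recovering a star of full size is genuinely delicate and is not known to hold for arbitrary downsets. I expect the bulk of the effort to go into a uniform case analysis of how a maximum intersecting family realigns under a single compression, and it is precisely the absence of such control, uniformly in $n$, that leaves the general conjecture open and that motivates the finite, certificate-based verification developed in the remainder of this paper.
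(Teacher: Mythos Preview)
The statement you are attempting to prove is Conjecture~\ref{chvatalsconj}, which is \emph{open}; the paper does not prove it in general and makes no such claim. Its contribution is Theorem~\ref{7result}, establishing the conjecture only for $|U(\mathcal{D})|\leq 7$ via certified exact integer programming. There is therefore no proof in the paper to compare your proposal against, and any purported proof of the full conjecture would be a major result well beyond the scope of this work.

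Your proposal is a programme rather than a proof, and you yourself flag where it fails. Two of the gaps are fatal as stated. First, the ``transfer lemma'' is not known and is plausibly false in the form you need: compression $C_{ij}$ can destroy the intersecting property of a subfamily (two sets whose only common element is $j$ may be sent to disjoint sets), and no uniform repair is available---this is exactly why shifting, which dispatches Erd\H{o}s--Ko--Rado and many variants, has not settled Chv\'atal's conjecture. Second, the pull-back step is a non-sequitur: even if $C_{ij}(\mathcal{F})$ has a maximum intersecting star, its preimage in $\mathcal{F}$ need not be a star, and ``local exchange'' cannot manufacture one without already assuming the conjecture for $\mathcal{F}$. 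The inductive recombination has the same defect: maximum stars of $\mathcal{F}_0$ and $\mathcal{F}_1$ centered at $1$ do not obviously assemble into a \emph{maximum} intersecting subfamily of $\mathcal{F}^{\ast}$, since an optimal intersecting family may mix sets containing $n$ and sets avoiding $n$ in a way that strictly dominates both pieces. These are not technicalities but the substance of why the conjecture has stood since 1974.
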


Sch\"onheim~\cite{schonheim1975hereditary} showed that Chvátal's
conjecture holds for all downsets $\mathcal{D}$ whose bases have a
nonempty intersection. Stein~\cite{stein1983chvatal} proved the
conjecture holds for all downsets in which all but one of the bases is
a \emph{simple} star, \ie a star in which the intersection of all of
its sets is equal to the intersection of any of its two
sets. Mikl{\'o}s~\cite{miklos1984great} showed that Chvátal's
conjecture holds for any downset $\mathcal{D}$ that contains an
intersection family of size
$\floor{\mathcal{D}/2}$. Sterboul~\cite{sterboul1974conjecture} proved
that any downsets whose sets have three or less elements always
satisfy Conjecture~\ref{chvatalsconj}. The last result was recently
proven again in different ways by Czabarka, Hurlbert,
Kamat~\cite{2017arXiv170300494C} and Olarte, Santos,
Spreer~\cite{OLARTE20192192}. Furthermore, Chvátal maintains a
website
dedicated to the conjecture with a substantial list of publications on
the topic~\cite{chvatalweb}.

Our main result on Chvátal's conjecture is the following:
\begin{theorem}\label{7result}
  Conjecture~\ref{chvatalsconj} holds for all downsets $\mathcal{D}$
  such that $|U(\mathcal{D})| \leq 7$.
\end{theorem}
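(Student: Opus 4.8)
The plan is to turn the conjecture, for a fixed ground set, into a single integer program and to prove the theorem by certifying---in exact rational arithmetic---that this program admits no counterexample. The starting point is a numerical reformulation of the star property. For a downset $\mathcal{D}$ and an element $i \in [n]$, let $d_i = |\{A \in \mathcal{D} : i \in A\}|$ be the number of members containing $i$; the largest star centred at $i$ is exactly $\{A \in \mathcal{D} : i \in A\}$, so the maximum cardinality of a star equals $\max_{i} d_i$. Every star is intersecting, hence the maximum cardinality $m(\mathcal{D})$ of an intersecting subfamily of $\mathcal{D}$ always satisfies $m(\mathcal{D}) \ge \max_i d_i$, and one sees at once that $\mathcal{D}$ has the star property precisely when equality holds, that is, when $m(\mathcal{D}) \le \max_i d_i$. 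A counterexample to Conjecture~\ref{chvatalsconj} is therefore a downset admitting an intersecting subfamily strictly larger than every one of its stars.

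I would encode the hunt for such a counterexample directly. For each $S \subseteq [n]$ introduce binary variables $x_S$ and $y_S$, where $x_S = 1$ means $S \in \mathcal{D}$ and $y_S = 1$ means $S$ lies in the candidate intersecting subfamily. The downset axiom becomes $x_S \le x_T$ for every covering pair $T \subsetneq S$; the subfamily condition becomes $y_S \le x_S$; and the intersecting condition becomes $y_S + y_T \le 1$ for every pair of disjoint sets $S \cap T = \emptyset$ (which also forces $y_\emptyset = 0$, correctly banning the empty set). Finally, a single variable $t$ constrained by $t \ge \sum_{S \ni i} x_S$ for each $i \in [n]$ records the largest degree, so that the objective $\sum_{S} y_S - t$ maximized over all feasible $(x,y,t)$ equals $\max_{\mathcal{D}} \bigl( m(\mathcal{D}) - \max_i d_i \bigr)$. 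Proving Theorem~\ref{7result} for this $n$ amounts to showing that this maximum is at most $0$, equivalently that the system obtained by adding $\sum_S y_S \ge t + 1$ is infeasible.

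A convenient feature is that a downset need not use every element of its ground set, so the single instance with $n = 7$ already subsumes every downset with $|U(\mathcal{D})| \le 7$: unused elements simply have degree $0$ and do not affect $\max_i d_i$. To make the verification tractable I would quotient the search by the action of the symmetric group $S_n$ permuting $[n]$, which sends downsets to downsets and intersecting families to intersecting families, so that it suffices to rule out counterexamples up to this symmetry. The main obstacle, and the reason the word ``exact'' is essential, is trustworthiness: a floating-point solver reporting infeasibility is not a proof. The argument must instead run branch-and-bound with exact rational bounds and emit a certificate---a tree of dual bounds together with the branching decisions---whose correctness can be checked by an elementary, solver-independent verifier. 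Engineering this certificate so that it is both small enough to produce for $n = 7$ and simple enough to check by an auditable script is where I expect the real work to lie; once it is generated and independently validated, Theorem~\ref{7result} follows.
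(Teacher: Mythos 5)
Your proposal is correct and follows essentially the same route as the paper: your $(x,y,t)$ model is, up to replacing the paper's combined generation inequalities $y_T \leq x_S$ for $S \subseteq T$ by separate covering-pair and containment constraints, exactly the optimality-based formulation \optprob{n}, and your plan of certifying the result via exact rational branch-and-bound with an elementary, solver-independent certificate is precisely the paper's exact-SCIP/VIPR framework (the paper additionally verifies in Coq that the certificate's problem data matches the intended IP). The only substantive difference is performance engineering: the paper also builds a reduced model $\redprob{n}$ with cuts and fixings drawn from known partial results (Berge, Kleitman--Magnanti, Sterboul) and solves incrementally for $n=5,6,7$, whereas you solve the plain formulation in one shot at $n=7$ --- which the paper's own experiments confirm is feasible, just considerably more expensive.
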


To the best of our knowledge there
is no known computational methodology in the literature that
investigates Conjecture~\ref{chvatalsconj} even for small ground sets.
The previously known best bound on the cardinality of the ground set was $|U(\mathcal{D})| = 5$,
which follows directly from~\cite{sterboul1974conjecture} as we show in
Proposition~\ref{3sets}.

% Conjecture~\ref{chvátalsconj} appears to be well-suited to an IP
% approach.
Already Fishburn~\cite{doi:10.1137/1030133} highlighted the connection
between combinatorial optimization and Chvátal's conjecture and
investigated related problems. Thus modeling the conjecture as an IP
and using solvers that safeguard against numerical issues is a natural
step to further investigate Conjecture~\ref{chvatalsconj}.
% Comparing the results of Table~\ref{fresultstable} with a basic
% propositional satisfiability (SAT) approach by using
% MiniSat+~\cite{een2006translating} to automatically get a SAT
% encoding for the IP formulation \infprob in
% Section~\ref{infeasibilityproblem}, we are unable to solve
% \emph{any} instance for $|U(\mathcal{D})| \geq 6$ within a 24 hour
% limit even when trying with more current SAT solvers such as
% Lingeling~\cite{Biere-SAT-Competition-2016-solvers}.
% Since SAT solvers operate on symbolic logic, thus avoiding floating point arithmetic, it
% Finally, we use Coq~\cite{coq2018} to ensure the correctness of the
% input to the IP solver. To the best of our knowledge, the IPs
% related to Conjecture~\ref{chvátalsconj} are the first to have their
% input verified by interactive theorem prover.
%
% As we will show in this note, it is relatively easy to represent
% results about the conjecture as cuts to the underlying polytope of
% the IP formulation, which reduce the number of feasible integral
% points. In practice, as new results become known (and the conjecture
% still remains open) and represented as cuts, the solution of larger
% IP formulations may became within reach, for example on $n\geq 9$.
%
Furthermore, relying on an IP framework to investigate Chvátal's
conjecture for small ground sets has other advantages. First, the rich
and well-developed theory of polyhedral combinatorics, that is
inherent in an IP approach, may lead to new insights on
Conjecture~\ref{chvatalsconj}. Second, as we see in
Section~\ref{validIneq}, known
partial results on Chvátal's conjecture can be encoded as ``cuts'' in
our framework.  This improves the performance of the exact rational
solver and may allow strengthening of Theorem~\ref{7result} in the
future.
% More generally, we believe our IP framework is flexible enough to
% investigate other extremal set theory conjectures on small ground
% sets, with minor changes to the implementation.

In the following sections we develop integer programming formulations for
Chvátal's conjecture over fixed-size ground sets.
The formulations are based on decision variables that index members of the power
set. Due to the exponential nature of power sets, the size of the formulations
is bound to grow quickly with the size of the ground set.
However, even for small ground sets little is known and the results in
Section~\ref{experimentalResults} improve upon what is known today.

\subsection{An Infeasibility-Based Formulation}
\label{infeasibilityproblem}

The first IP model is formulated such that Chvátal's conjecture holds for the considered ground set if and only if the IP has no solution.
In other words, a feasible solution to the IP formulation for any fixed $n$
would yield a counterexample to Chvátal's conjecture.
Let $2^{[n]}$ denote the power set of $[n]$, then we consider the integer
program \infprob{n},
\begin{subequations}
  \begin{align}
    \max  \sum_{S \in 2^{[n]} }x_{S}&   \label{eq:objective}\\
    x_{T} &\leq x_{S}
          && {\forall T \in 2^{[n]}, \forall S \in 2^{[n]}: S \subset T}, \label{eq:ideal}\\
    y_{T} + y_{S} &\leq 1
          && {\forall T \in 2^{[n]} \setminus \{\emptyset\}, \forall S \in 2^{[n]} \setminus \{\emptyset\}: T \cap S = \emptyset}, \label{eq:inters}\\
    y_S &\leq x_{S} &&\forall S \in  2^{[n]}, \label{eq:contain}\\
    \sum_{S \in 2^{[n]}: i \in S }x_{S} + 1 &\leq \sum_{S \in 2^{[n]}
                                              \setminus \{\emptyset\}
                                              }y_{S}
          && {\forall i \in [n]}, \label{eq:star}\\
    x_S, y_S &\in \{0,1\} && \forall S\in 2^{[n]}. \notag
  \end{align}
\end{subequations}

Here, $x$ encodes the set family $\incset(x) := \{ S \subseteq [n] : x_S = 1 \}$
and $y$ encodes the sub family $\incset(y) := \{ S \subseteq [n] : y_S = 1 \}$.
The first class of \emph{downset} inequalities \eqref{eq:ideal} ensures that
$\incset(x)$ is a downset.
The second class of \emph{intersecting} inequalities
\eqref{eq:inters} ensures that $\incset(y) \setminus \{\emptyset\}$ is an
intersecting family.
The third class of \emph{containment} inequalities \eqref{eq:contain} ensures
that the intersecting family is contained in the chosen downset, $\incset(y)
\subseteq \incset(x)$.
Finally, the fourth class of \emph{star} inequalities \eqref{eq:star} requires
that the intersecting family has greater cardinality than any star in the
downset.

\begin{theorem}\label{infeastheorem}
  Let $n$ be a positive integer. All downsets $\mathcal{F}$ such that
  $|U(\mathcal{F})|\leq n$ satisfy Chvátal's conjecture if and only if
  \infprob{n} is infeasible.
\end{theorem}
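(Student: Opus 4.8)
The plan is to prove both implications by contraposition, exhibiting a direct correspondence between feasible solutions of \infprob{n} and counterexamples to Chvátal's conjecture on ground sets of size at most $n$. The conceptual core is a cardinality reformulation of the star property that I would record first as an observation. Within a downset $\mathcal{F}$, a maximum-sized star is obtained by fixing an element $i \in U(\mathcal{F})$ and collecting every member of $\mathcal{F}$ containing $i$, so the largest star has size $\max_{i} |\{S \in \mathcal{F} : i \in S\}|$. Since every star is itself intersecting, $\mathcal{F}$ has the star property if and only if the maximum cardinality of an intersecting subfamily equals the maximum cardinality of a star. Equivalently, $\mathcal{F}$ \emph{violates} the conjecture if and only if there exists an intersecting subfamily $\mathcal{I} \subseteq \mathcal{F}$ of nonempty sets with $|\mathcal{I}| > |\{S \in \mathcal{F} : i \in S\}|$ for every $i \in U(\mathcal{F})$.

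For the direction that infeasibility implies the conjecture, I would argue contrapositively. Suppose some downset $\mathcal{F}$ with $|U(\mathcal{F})| \leq n$ fails the conjecture. After relabeling the ground elements, which preserves the star property, I may assume $U(\mathcal{F}) \subseteq [n]$ and hence $\mathcal{F} \subseteq 2^{[n]}$; by the observation there is an intersecting family $\mathcal{I} \subseteq \mathcal{F}$ of nonempty sets beating every star. Setting $x_S = 1$ exactly for $S \in \mathcal{F}$ and $y_S = 1$ exactly for $S \in \mathcal{I}$ then satisfies \eqref{eq:ideal} because $\mathcal{F}$ is a downset, \eqref{eq:inters} because $\mathcal{I}$ is intersecting, \eqref{eq:contain} because $\mathcal{I} \subseteq \mathcal{F}$, and \eqref{eq:star} because for each $i$ the left-hand count $|\{S \in \mathcal{F} : i \in S\}|$ lies strictly below $|\mathcal{I}| = \sum_{S \neq \emptyset} y_S$. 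Thus \infprob{n} is feasible, contradicting the hypothesis.

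The reverse direction runs the same correspondence backwards. Given a feasible $(x,y)$, I would set $\mathcal{F} := \{S : x_S = 1\}$ and $\mathcal{I} := \{S \neq \emptyset : y_S = 1\}$. Constraint \eqref{eq:ideal} makes $\mathcal{F}$ a downset, \eqref{eq:inters} makes $\mathcal{I}$ intersecting, \eqref{eq:contain} gives $\mathcal{I} \subseteq \mathcal{F}$, and $\mathcal{F} \subseteq 2^{[n]}$ forces $|U(\mathcal{F})| \leq n$. The star inequalities \eqref{eq:star} say precisely that $|\mathcal{I}| > |\{S \in \mathcal{F} : i \in S\}|$ for every $i \in [n]$, hence for every $i \in U(\mathcal{F})$; in particular some $y_S = 1$ with $S \neq \emptyset$, so $\mathcal{I}$ is a nonempty intersecting family strictly larger than every star. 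By the observation $\mathcal{F}$ violates the conjecture, again contradicting the hypothesis in its contrapositive form.

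I expect the only genuine subtlety, rather than a real obstacle, to be the bookkeeping around the empty set: the variable $y_{\emptyset}$ is unconstrained by \eqref{eq:inters} and is excluded from the right-hand side of \eqref{eq:star}, so I must confirm that restricting attention to nonempty sets neither breaks the intersecting property nor alters the counts in the star inequalities. This also cleanly disposes of the degenerate family $\mathcal{F} = \{\emptyset\}$, which admits no feasible solution because containment then forces every nonempty $y_S = 0$. Everything else reduces to a routine verification that each structural property of the set families matches exactly one class of constraints.
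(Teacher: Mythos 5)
Your proof is correct and takes essentially the same route as the paper: both directions set up the same correspondence between feasible solutions of \infprob{n} and counterexamples on ground sets of size at most $n$, and your preliminary observation that the largest star in a downset has size $\max_{i} |\{S \in \mathcal{F} : i \in S\}|$ is exactly what the paper uses implicitly when verifying the star inequalities~\eqref{eq:star}. The only differences are cosmetic: the paper phrases the second direction as deriving a violated constraint~\eqref{eq:star} from an assumed feasible solution rather than contrapositively, and you additionally spell out the empty-set bookkeeping that the paper leaves tacit.
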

\begin{proof}
  Fix $n$.
  Suppose that Chvátal's conjecture does not hold, \ie there exists a downset
  $\mathcal D$ and an intersecting family $\mathcal Y \subseteq \mathcal{D}$ such that
  $\Abs{\mathcal Y}$ is larger than the size of every star in $\mathcal D$.
  \wlogU assume $\mathcal D \subseteq 2^{[n]}$.
  Let $x$ and $y$ be their incidence vectors, \ie $\mathcal{D} = \incset(x)$
  and $\mathcal{Y} = \incset(y)$.
  By construction, $x$ and $y$ satisfy
  constraints~(\ref{eq:ideal}--\ref{eq:contain}).
  Furthermore, for each element~$i\in[n]$, $\Abs{\mathcal Y}$ is larger than the
  size of all stars that have~$i$ as common element, hence \eqref{eq:star} is
  equally satisfied.
  In total, $x$ and $y$ constitute a feasible solution to \infprob{n}.

  Conversely, suppose all downsets $\mathcal{D}$ such that $|U(\mathcal{D})|\leq
  n$ satisfy Chvátal's conjecture.  Suppose $x$ and $y$ are feasible solutions
  to \infprob{n}.
  By \eqref{eq:ideal}, $\incset(x)$ forms a downset and
  $|U(\incset(x))|\leq n$.
  By \eqref{eq:inters} and \eqref{eq:contain}, $\mathcal{Y} := \incset(y) \setminus \{\emptyset\}$ forms an
  intersecting family contained in $\incset(x)$.
  Hence, $|\mathcal{Y}|$ can be at most the size of the largest star contained in
  $\incset(x)$,
  \begin{equation}
    |\mathcal{Y}| = \sum_{S \in 2^{[n]} \setminus \{\emptyset\} }y_{S} \leq
    \max_{i\in[n]} \sum_{S \in 2^{[n]}: i \in S }x_{S}.
  \end{equation}
  But then constraint \eqref{eq:star} is violated for $i_0 \in
  \arg\max_{i\in[n]} \sum_{S \in 2^{[n]}: i \in S }x_{S}$.
\end{proof}

Note that the objective function \eqref{eq:objective} of \infprob{n} is, in some sense, arbitrary
since we only need to decide whether the integer program has a feasible solution
or not.
The objective function encodes the cardinality of~$\incset(x)$, hence solving
\infprob{n} amounts to searching for a largest counterexample to
Conjecture~\ref{chvatalsconj}.
In the following we present a more advanced formulation that uses the optimal
value as an essential component.

\subsection{An Optimality-Based Formulation}
\label{feasibilityprob}

As already noted in~\cite{OLARTE20192192} it is sufficient to only consider
downsets generated by an intersecting family. We use this insight in the
following, advanced formulation \optprob{n},
\begin{subequations}
  \begin{align}
    \max  \sum_{S \in 2^{[n]}\setminus \{\emptyset\} }& y_{S} - z  \\
    y_{T} + y_{S} &\leq 1
                  && {\forall T \in 2^{[n]} \setminus \{\emptyset\}, \forall S \in 2^{[n]} \setminus \{\emptyset\}: T \cap S = \emptyset},  \label{eq:inters2}\\
    \sum_{S \in 2^{[n]}: i \in S }x_{S} &\leq z
                  && {\forall i \in [n]},  \label{eq:star2} \\
    y_T &\leq x_{S} &&  {\forall T \in 2^{[n]}, \forall S \in 2^{[n]}: S \subseteq T}, \label{eq:gen}\\
    x_S, y_S &\in \{0,1\}
                  && \forall S\in 2^{[n]}, \notag\\
    z &\in \mathbb{Z}_{\geq 0}. \notag
  \end{align}
\end{subequations}

The first class of \emph{intersecting} inequalities \eqref{eq:inters2}
is the same as \eqref{eq:inters}, whereas the second class of
\emph{star} inequalities \eqref{eq:star2} differs from
\eqref{eq:star}. It ensures that the largest star is bounded above by
the positive integer variable $z$. Finally, the third class of
\emph{generation} inequalities \eqref{eq:gen} ensures, as will be made
clearer in the proof of Theorem~\ref{feasthrm}, that an optimal
solution of \optprob{n} considers only downsets generated by the
intersecting family. We note that the generation inequalities
\eqref{eq:gen} can also be included in \infprob{n} instead of
\eqref{eq:ideal} and \eqref{eq:contain} by the same argument.  Before
we formally state and prove the correctness of \optprob{n} with
regards to Conjecture~\ref{chvatalsconj}, we need the following
observation.
\begin{observation}\label{tightz}
  Let $n$ be a positive integer. An optimal solution of \optprob{n}
  satisfies at least one star inequality~\eqref{eq:star2} with equality.
\end{observation}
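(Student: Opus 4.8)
The plan is to argue by contradiction, exploiting the fact that the variable $z$ enters the objective only through the penalty term $-z$, so that any optimal solution will push $z$ as low as the constraints allow. First I would suppose that some optimal solution $(x,y,z)$ violates the conclusion, meaning that every star inequality~\eqref{eq:star2} is strict, $\sum_{S \in 2^{[n]}: i \in S} x_S < z$ for all $i \in [n]$. Since the left-hand sides and $z$ are all integers, this sharpens to $\sum_{S \in 2^{[n]}: i \in S} x_S \leq z - 1$ for every~$i$. As each such sum is a sum of nonnegative terms, this simultaneously forces $z \geq 1$.

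The next step is to exhibit a strictly better feasible point, contradicting optimality. I would keep $x$ and $y$ fixed and replace $z$ by $z' := z - 1$. The intersecting inequalities~\eqref{eq:inters2} and the generation inequalities~\eqref{eq:gen} do not involve $z$, so they remain satisfied. The star inequalities hold for the new value because $\sum_{S: i \in S} x_S \leq z - 1 = z'$ for every $i$, and the integrality requirement $z' \in \mathbb{Z}_{\geq 0}$ holds since $z \geq 1$. Hence $(x, y, z')$ is feasible, and its objective value equals $\sum_{S} y_S - z' = \sum_{S} y_S - z + 1$, exceeding the value of $(x,y,z)$ by exactly one. This contradicts the assumed optimality of $(x,y,z)$ and proves the observation.

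The only point demanding care is the boundary case $z = 0$: I must ensure that decrementing $z$ does not leave the feasible region by making $z'$ negative. This is precisely why I first record that strict star inequalities force $z \geq 1$, so that $z - 1$ remains a valid nonnegative integer. Beyond this bookkeeping there is no genuine obstacle, as the whole argument amounts to a single-coordinate perturbation in the variable $z$, and the integrality of the star-size sums is what lets a strict inequality be improved by a full unit.
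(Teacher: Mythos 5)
Your proof is correct and follows essentially the same idea as the paper: since $z$ has a negative objective coefficient and is bounded below only by zero and the left-hand sides of~\eqref{eq:star2}, an optimal solution pushes $z$ down until a star inequality binds. Your version simply makes this rigorous via a unit-decrement contradiction argument (using integrality) and explicitly checks the $z=0$ boundary case, which the paper's sketch glosses over.
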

Observation~\ref{tightz} follows from the objective function of
\optprob{n}.
Variable~$z$ is restricted only from below by its lower bound zero and the
left-hand sides of constraints~\eqref{eq:star2}.
Since \optprob{n} is a maximization problem and the objective coefficient of~$z$
is negative, in an optimal solution the variable $z$ will be as small as
possible.
This implies that at least one star inequality~\eqref{eq:star2} is tight.
%% %
%% The next observation follows immediately from our discussion.
%% \begin{observation}\label{maxstar}
%%   Let $n$ be a positive integer. In an optimal solution of \optprob{n}
%%   the cardinality of a maximal star equals $z$.
%% \end{observation}

%% These observations help to prove the following connection between \optprob{n}
%% and Conjecture~\ref{chvatalsconj}.

\begin{theorem}\label{feasthrm}
  Let $n$ be a positive integer. Downsets $\mathcal{D}$ such that
  $|U(\mathcal{D})|\leq n$ satisfy Chvátal's conjecture if and only if
  the objective function value of an optimal solution of \optprob{n}
  is zero.
\end{theorem}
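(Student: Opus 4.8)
The plan is to show that the optimal value of \optprob{n} is always nonnegative and is strictly positive precisely when a counterexample to Conjecture~\ref{chvatalsconj} over $[n]$ exists; the stated equivalence then follows at once. First I would observe that $x=y=0$, $z=0$ is always feasible with objective value $0$, so the optimum is at least zero. It therefore suffices to prove that the optimal value is strictly positive if and only if some downset $\mathcal{D}$ with $|U(\mathcal{D})|\leq n$ violates Chvátal's conjecture.

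For the forward direction I would begin with such a counterexample, namely a downset $\mathcal{D}\subseteq 2^{[n]}$ and an intersecting family $\mathcal{Y}\subseteq\mathcal{D}$ whose cardinality strictly exceeds every star in $\mathcal{D}$. Invoking the reduction of~\cite{2018arXiv180403646O}, I may assume $\mathcal{D}$ is exactly the downset generated by $\mathcal{Y}$, since passing to this smaller downset can only shrink its stars and hence preserves the violation. I would then set $y$ to the incidence vector of $\mathcal{Y}$, $x$ to the incidence vector of $\mathcal{D}$, and $z:=\max_{i\in[n]}\sum_{S: i\in S}x_S$, the largest star size. Constraints \eqref{eq:inters2}, \eqref{eq:star2}, and \eqref{eq:gen} then follow respectively from the intersectingness of $\mathcal{Y}$, the definition of $z$, and the fact that $\mathcal{D}$ is a downset containing $\mathcal{Y}$; the objective equals $|\mathcal{Y}|-z>0$.

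For the converse I would take any feasible solution of strictly positive objective, put $\mathcal{Y}:=\incset(y)\setminus\{\emptyset\}$, and let $\mathcal{D}$ be the downset generated by $\mathcal{Y}$. By \eqref{eq:inters2} the family $\mathcal{Y}$ is intersecting, and by the generation inequalities \eqref{eq:gen} every subset of a member of $\mathcal{Y}$ has its $x$-variable set to one, so $\mathcal{D}\subseteq\incset(x)$. Consequently every star of $\mathcal{D}$ is bounded by the corresponding star of $\incset(x)$, which by \eqref{eq:star2} is at most $z$. Positivity of the objective gives $|\mathcal{Y}|=\sum_{S\neq\emptyset}y_S>z$, so $\mathcal{Y}$ dominates every star of $\mathcal{D}$, exhibiting a counterexample with $|U(\mathcal{D})|\leq n$.

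The main point to watch is the interplay between $z$, the generation inequalities, and the stars of the \emph{generated} downset $\mathcal{D}$ rather than of $\incset(x)$ itself: one must check that restricting to $\mathcal{D}\subseteq\incset(x)$ only decreases star sizes, so the bound $z$ still dominates them, while the counted family $\mathcal{Y}$ retains its full cardinality. Observation~\ref{tightz} supplies the complementary fact that in an optimal solution $z$ equals the largest star size exactly, ensuring the objective records $|\mathcal{Y}|$ minus the true maximum star size, so that a zero optimum leaves no room for any violation.
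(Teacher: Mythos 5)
Your proof is correct and takes essentially the same route as the paper: both arguments hinge on the generation inequalities \eqref{eq:gen} to pass between the intersecting family $\incset(y)\setminus\{\emptyset\}$ and the downset it generates, establish a lower bound of zero by exhibiting a trivial feasible solution, and convert any positive-objective solution into a counterexample and vice versa. The only cosmetic differences are that the paper resets $x_S = \max_{T\supseteq S} y_T$ w.l.o.g.\ so that $\incset(x)$ itself becomes the generated downset, whereas you bound the stars of the generated downset by those of $\incset(x)$ (neatly sidestepping that $\incset(x)$ need not be a downset), and your all-zero feasible solution replaces the paper's star-based one.
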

\begin{proof}
  Fix $n \in \N$.
  First note that \optprob{n} is feasible since any star is also an intersecting
  family.  Thus for any downset $\mathcal{D} \subseteq 2^{[n]}$, choosing $x$ as
  the indicator vector of $\mathcal{D}$ and setting the $y$-variables such that
  they represent a maximum-cardinality star in $\mathcal{D}$ yields a feasible
  solution.  Choosing $z$ to be the maximum star cardinality, \ie the smallest
  value such that constraints~\eqref{eq:star2} are satisfied, also proves a
  lower bound of zero on the objective value.

  Now suppose all downsets $\mathcal{D}$ such that $|U(\mathcal{D})|\leq n$
  satisfy Chvátal's conjecture and let $x,y,z$ be an optimal solution for
  \optprob{n}.
  Then it suffices to show that $\sum_{S \in 2^{[n]}\setminus \{\emptyset\} }
  y_{S} \leq z$.
  Constraints~\eqref{eq:inters2} ensure that $\mathcal Y := \incset(y) \setminus
  \{\emptyset\}$ forms an intersecting family.
  Furthermore, the $x$-variables do not appear in the objective function and are
  bounded below only by constraints~\eqref{eq:gen}.
  Hence, \wlogL we may assume that $x_S = \max_{T \supseteq S} y_T$.
  Then, by constraints~\eqref{eq:gen}, $\mathcal D := \incset(x)$ is a downset
  and $\mathcal Y \subseteq \mathcal D$.
  Assuming Chvátal's conjecture ensures that $|\mathcal Y| = \sum_{S \in
    2^{[n]}\setminus \{\emptyset\} } y_{S}$ is at most the size of the largest
  star in~$\mathcal D$, which by constraints~\eqref{eq:star2} is less than or
  equal to $z$.

  Conversely, suppose there exists a counterexample to Chvátal's conjecture,
  \ie a downset $\mathcal{D}$, $|U(\mathcal{D})| \leq n$, and an intersecting
  family $\mathcal{Y} \subseteq \mathcal{D}$ such that $|\mathcal Y|$ is larger
  than the size of any star in $\mathcal{D}$.
  \wlogU assume $\mathcal{D} \subseteq 2^{[n]}$ and let $x$ and $y$ be the
  incidence vectors of $\mathcal{D}$ and $\mathcal{Y}$, respectively, \ie
  $\mathcal{D} = \incset(x)$ and $\mathcal{Y} = \incset(y)$.
  Let $z$ be the size of the largest star in $\mathcal{D}$, \ie $z = \max
  \sum_{S \in 2^{[n]}: i \in S }x_{S}$.
  Then by construction, $x,y,z$ is a feasible solution for \optprob{n}.
  Because we consider a counterexample, the objective function value is
  at least one.
\end{proof}

\subsection{Valid Inequalities and Model Reductions}
\label{validIneq}

As mentioned in Section~\ref{intro}, one of the advantages of an IP
approach is that \infprob{n} and \optprob{n} can be studied in greater
depth through polyhedral combinatorial techniques. Furthermore known
results from the literature can be expressed as valid inequalities and problem reductions for
\infprob{n} and \optprob{n}, in the sense that
Theorems~\ref{infeastheorem} and~\ref{feasthrm} still hold with these
additional constraints, and the number of feasible solutions is less
than or equal to the number of current solutions.
This is demonstrated in the following section and may help to increase the size
of $n$ for which the models can be solved.

First, consider the intersecting inequalities of form~\eqref{eq:inters}
and~\eqref{eq:inters2}.
When $T = [n] \setminus S$, these can be interpreted as a special case of
the following partition inequalities.

\begin{proposition}%[Partition inequalities]
  Let $n$ be a positive integer and let $\mathcal{P}$ be a partition
  of $[n]$. Then the inequality
  \begin{equation}
    \sum_{S \in \mathcal{P} }y_{S} \leq 1
  \end{equation}
  is a valid for \infprob{n} and \optprob{n}.
\end{proposition}
\begin{proof}
  Suppose $\sum_{S \in \mathcal{P} }y_{S} \geq 2$ for an integer feasible
  solution $y$, then there exist $S,T \in \mathcal P$ with $y_{S} = y_{T} = 1$,
  violating~\eqref{eq:inters} and~\eqref{eq:inters2}.
\end{proof}

%% As a consequence we will see that intersection inequalities that do
%% not cover the whole ground set are dominated by partition
%% inequalities. In this context, by dominated we mean that the set of
%% vectors that satisfies a particular intersection inequality that
%% does not cover the ground set is contained in the
%% set of vectors that satisfies some partition inequality.
%
As a consequence, intersection inequalities that do
not cover the whole ground set can be strengthened.

\begin{proposition}
  Let $n$ be a positive integer and
  $S,T \in 2^{[n]}\setminus \{\emptyset\}$ such that
  $T \cap S = \emptyset$. Suppose $S \cup T \neq [n]$. Then the
  intersecting inequality
  \begin{equation}\label{eq:weakinters}
    y_{T}+y_{S} \leq 1
  \end{equation}
  is dominated by a partition inequality.
\end{proposition}
\begin{proof}
  $S,T$ can be completed to a partition by their complement $[n] \setminus (S
  \cup T)$. Inequality~\eqref{eq:weakinters} is trivially dominated by the
  corresponding partition inequality $y_{T}+y_{S}+y_{[n] \setminus (S \cup T)}
  \leq 1$.
\end{proof}

The large number of partitions prohibits the static addition of these
inequalities to the formulation.
However, modern IP solvers automatically extract the conflicting $y$-assignments
from constraints~\eqref{eq:inters} and~\eqref{eq:inters2} and add partition
inequalities dynamically.
Next, consider the following central result.

\begin{theorem}[Berge~\cite{berge1975theorem}]~\label{bergthrm} If
  $\mathcal{D}$ is a downset then $\mathcal{D}$ is a disjoint union of
  pairs of disjoint sets, together with $\emptyset$ if $|\mathcal{D}|$
  is odd.
\end{theorem}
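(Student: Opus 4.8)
The plan is to argue by induction on $n = \Abs{U(\mathcal{D})}$. The base cases $\mathcal{D} = \emptyset$ and $\mathcal{D} = \{\emptyset\}$ are immediate: in the latter $\Abs{\mathcal{D}} = 1$ is odd and the single leftover set is~$\emptyset$. For the inductive step I fix an element $n \in U(\mathcal{D})$ and split $\mathcal{D}$ according to membership of~$n$, writing $\mathcal{D}_0 = \{S \in \mathcal{D} : n \notin S\}$ and $\mathcal{D}_1 = \{S \in \mathcal{D} : n \in S\}$. Since $\mathcal{D}$ is a downset, $\mathcal{D}_0$ is a downset on $[n-1]$ and the restriction map $S \mapsto S \setminus \{n\}$ is an injection of $\mathcal{D}_1$ onto a downset $\mathcal{D}_1' \subseteq \mathcal{D}_0$. (If $\mathcal{D}_1 = \emptyset$ then $n \notin U(\mathcal{D})$ and the hypothesis applies with a smaller ground set, so I may assume $\emptyset \in \mathcal{D}_1'$.)

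The first observation is that any two disjoint members of $\mathcal{D}$ include at most one set containing~$n$; hence in a valid decomposition every set of $\mathcal{D}_1$ must be paired with a disjoint set of $\mathcal{D}_0$. This reduces the problem to producing an injection $f : \mathcal{D}_1' \to \mathcal{D}_0$ such that (i) $A \cap f(A) = \emptyset$ for every $A \in \mathcal{D}_1'$, and (ii) the image of $f$ is upward closed in $\mathcal{D}_0$, equivalently $\mathcal{D}_0 \setminus f(\mathcal{D}_1')$ is again a downset. Given such an $f$, the pairs $\{A \cup \{n\}, f(A)\}$ for $A \in \mathcal{D}_1'$ are disjoint and exhaust $\mathcal{D}_1$ together with the image of $f$, while the remaining sets form the downset $\mathcal{D}_0 \setminus f(\mathcal{D}_1')$, to which I apply the induction hypothesis.

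A short bookkeeping check shows this closes the induction correctly. Because $\Abs{\mathcal{D}} = 2\Abs{\mathcal{D}_1'} + \Abs{\mathcal{D}_0 \setminus f(\mathcal{D}_1')}$, the parities of $\Abs{\mathcal{D}}$ and of the residual downset agree, so the induction produces a leftover set exactly when $\Abs{\mathcal{D}}$ is odd. Moreover, property~(ii) guarantees that this leftover is~$\emptyset$: if $\emptyset$ belonged to the image of $f$, upward closedness would force the image to be all of $\mathcal{D}_0$ and the residual downset to be empty; otherwise $\emptyset$ survives into the residual downset and is precisely the set left uncovered by the inductive pairing.

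The main obstacle is establishing the existence of the injection $f$, \ie a system of distinct representatives for $\mathcal{D}_1'$ inside $\mathcal{D}_0$ that simultaneously respects disjointness and produces an upward closed image. I expect to obtain $f$ from Hall's theorem applied to the bipartite disjointness graph between $\mathcal{D}_1'$ and $\mathcal{D}_0$: the fact that $\mathcal{D}_1' \subseteq \mathcal{D}_0$ are nested downsets (so that every set of $\mathcal{D}_1'$ is disjoint from $\emptyset \in \mathcal{D}_0$ and, more generally, from every subset of its complement lying in $\mathcal{D}_0$) should yield the neighborhood bound $\Abs{N(\mathcal{S})} \geq \Abs{\mathcal{S}}$ for all $\mathcal{S} \subseteq \mathcal{D}_1'$, while an exchange argument that prefers maximal disjoint representatives forces the image to be upward closed. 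Verifying Hall's condition is where the real content lies; alternatively one could bypass the induction and prove a near-perfect matching directly in the disjointness graph on $\mathcal{D}$ via Tutte's condition, but the nested-downset matching appears to be the more tractable route.
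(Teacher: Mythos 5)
The paper itself offers no proof of this statement: it is quoted as a known result of Berge (with Anderson's book cited alongside Corollary~\ref{bergecor}), so your proposal must stand on its own --- and it does not, because it reduces the theorem to a matching lemma that you explicitly leave unproven. Your inductive framing is sound: splitting $\mathcal{D}$ into $\mathcal{D}_0$ and $\mathcal{D}_1$ by membership of $n$, and observing that any disjoint pair contains at most one set through $n$, is correct. But the entire combinatorial content is then packed into the claim that there is an injection $f:\mathcal{D}_1'\to\mathcal{D}_0$ with $A\cap f(A)=\emptyset$ \emph{and} upward-closed image, and neither of your two tools delivers it. First, the suggested route to Hall's condition is inadequate: the fact that each $A\in\mathcal{D}_1'$ is disjoint from $\emptyset$ and from subsets of its complement gives only trivial lower bounds on $|N(\mathcal{S})|$; for a subfamily $\mathcal{S}$ of large sets, $N(\mathcal{S})=\mathcal{D}_0\cap\bigcup_{A\in\mathcal{S}}2^{[n-1]\setminus A}$ can be small, and verifying $|N(\mathcal{S})|\geq|\mathcal{S}|$ for all $\mathcal{S}$ is essentially as hard as the theorem itself (indeed, Hall's condition here is usually \emph{deduced from} Berge's theorem, not the other way around). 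Second, even granting Hall, a Hall matching gives no control over the image, and upward-closure is a strict extra requirement: for the downset $\mathcal{D}=\{\emptyset,\{1\},\{2\},\{1,2\},\{3\},\{1,3\}\}$ the perfectly valid Berge decomposition $\{\{1,3\},\emptyset\}$, $\{\{3\},\{1,2\}\}$, $\{\{1\},\{2\}\}$ induces $f(\{1\})=\emptyset$, $f(\emptyset)=\{1,2\}$, whose image $\{\emptyset,\{1,2\}\}$ is not upward closed. So the ``exchange argument preferring maximal representatives'' is doing real work --- an exchange that repairs upward-closure can destroy disjointness or injectivity elsewhere --- and without it your induction does not close.

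For comparison, the standard inductive proof avoids Hall-type arguments entirely: apply the induction hypothesis to \emph{both} downsets $\mathcal{D}_0$ and $\mathcal{D}_1'$ (each lives on $[n-1]$), obtaining pairings $P_0$ and $P_1$; superimpose them on the vertex set $\mathcal{D}_0$, so every component of $P_0\cup P_1$ is an alternating path or cycle; then build the pairing of $\mathcal{D}$ componentwise, using each $P_1$-pair $\{A,B\}$ to form the cross pairs $\{A\cup\{n\},B\}$ and $\{B\cup\{n\},A\}$ (both disjoint, since $A\cap B=\emptyset$ and neither contains $n$) and shifting along paths so that only $\emptyset$ can remain uncovered. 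This sidesteps both of your missing steps: no Hall condition is needed, and no upward-closure of any image is required, because the residual sets are absorbed by the path-shifting rather than handed to a second induction. If you wish to pursue your own route, the strengthened matching lemma (disjointness-respecting injection with upward-closed image between nested downsets) is what must be proved, and it appears to be at least as deep as the theorem; the superposition argument is the more tractable path.
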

This yields the following result, that can easily be expressed as a valid inequality for
\infprob{n} and \optprob{n}, as in Corollary~\ref{bergecut}.
\begin{corollary}[Anderson~\cite{anderson2002combinatorics} p.105]\label{bergecor}
  Let $\mathcal{D}$ be a downset and $\mathcal{Y}$ an intersecting
  family such that $\mathcal{Y} \subseteq \mathcal{D}$. Then
  $2|\mathcal{Y}| \leq |\mathcal{D}|$.
\end{corollary}
\begin{corollary}\label{bergecut}
  Let $n$ be any positive integer. Suppose the following inequality
  \begin{equation}
    \sum_{S \in 2^{[n]}\setminus \{\emptyset\}}2y_{S} \leq \sum_{S \in 2^{[n]}}x_{S}
  \end{equation}
  is added to \infprob{n} and \optprob{n}. Then Theorems~\ref{infeastheorem}
  and~\ref{feasthrm} hold for the modified formulations of \infprob{n} and
  \optprob{n}, respectively.
\end{corollary}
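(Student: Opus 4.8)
The plan is to reduce everything to the single fact, supplied by Corollary~\ref{bergecor}, that the new inequality holds for the incidence vectors of every genuine (downset, intersecting subfamily) pair. First I would observe that, writing $\mathcal{D} = \incset(x)$ and $\mathcal{Y} = \incset(y) \setminus \{\emptyset\}$, the two sides of the added inequality are exactly $\sum_{S \in 2^{[n]}} x_S = |\mathcal{D}|$ and $2\sum_{S \neq \emptyset} y_S = 2|\mathcal{Y}|$, so the inequality encodes precisely the bound $2|\mathcal{Y}| \leq |\mathcal{D}|$. Hence, whenever $x$ and $y$ are built from an actual downset and an intersecting subfamily of it, Corollary~\ref{bergecor} guarantees that the inequality is satisfied. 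This is the only new ingredient; the remainder is bookkeeping over the proofs of Theorems~\ref{infeastheorem} and~\ref{feasthrm}, exploiting that the modified feasible region is contained in the original one.

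Next I would revisit the two implications of each theorem. For the directions ``no counterexample $\Rightarrow$ infeasibility / zero optimum'', nothing needs to be redone: these arguments use only constraints~(\ref{eq:ideal}--\ref{eq:star}) (respectively (\ref{eq:inters2}--\ref{eq:gen})), which remain present, and shrinking the feasible set can only preserve infeasibility and can only lower the optimum. For the converse directions ``counterexample $\Rightarrow$ feasibility / positive optimum'', the proofs construct a concrete solution $(x,y)$ (respectively $(x,y,z)$) from a counterexample; here I would merely check that this solution still lies in the modified region by invoking the observation above, since the counterexample furnishes a downset $\mathcal{D}$ and an intersecting $\mathcal{Y} \subseteq \mathcal{D}$, so Corollary~\ref{bergecor} applies verbatim. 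For \optprob{n} I would additionally re-verify that the lower bound of zero survives, by exhibiting one admissible zero-objective point, for instance the star $\{\{1\}\}$ together with the downset $\{\emptyset,\{1\}\}$ it generates, and checking it satisfies the new inequality.

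The one step requiring genuine care, which I expect to be the main obstacle, is the ``\Wlog $x_S = \max_{T \supseteq S} y_T$'' reduction used in the forward direction of Theorem~\ref{feasthrm}. In the unmodified problem this reduction is free because $x$ does not appear in the objective and is bounded only from below by~\eqref{eq:gen}; but shrinking $x$ to the generated downset lowers the right-hand side $\sum_{S} x_S$ of the new inequality, so the reduction could a priori destroy feasibility. I would resolve this by noting that after the reduction $\incset(x)$ is exactly the downset generated by $\mathcal{Y}$, whence $2|\mathcal{Y}| \leq |\incset(x)|$ holds again by Corollary~\ref{bergecor}; thus the reduced solution stays feasible for the modified formulation with unchanged objective, and the original argument (apply Chvátal's conjecture to conclude $\sum_{S \neq \emptyset} y_S \leq z$, hence objective $\leq 0$) carries over unchanged. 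Assembling the four verified implications then establishes Theorems~\ref{infeastheorem} and~\ref{feasthrm} for the modified formulations.
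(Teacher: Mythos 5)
Your proposal is correct and takes essentially the same route as the paper, which states Corollary~\ref{bergecut} without an explicit proof as an immediate consequence of Corollary~\ref{bergecor}: the added inequality merely encodes $2|\mathcal{Y}| \leq |\mathcal{D}|$, so it is satisfied by every solution built from a genuine downset/intersecting-family pair and hence excludes no counterexamples while preserving the feasibility and zero-objective constructions. Your careful treatment of the w.l.o.g.\ reduction $x_S = \max_{T \supseteq S} y_T$ is sound but avoidable: since adding a constraint can only shrink the feasible region, the modified optimum of \optprob{n} is at most the unmodified optimum, which is already zero under the hypothesis of Theorem~\ref{feasthrm}, so one never needs to rerun that reduction inside the modified problem.
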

The following result is used in~\cite{OLARTE20192192} to give a
simple proof that Chvátal's conjecture holds for all downsets whose
sets have three elements or less.
\begin{theorem}[Kleitman, Magnanti~\cite{kleitman1972number}]\label{fixonetwo}
  Any intersecting family that is contained in the union of two stars
  generates a downset that satisfies Conjecture~\ref{chvatalsconj}.
\end{theorem}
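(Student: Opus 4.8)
The plan is to prove the stronger quantitative statement that the maximum size of an intersecting subfamily of the generated downset $\mathcal{D}$ is attained by one of the two covering stars. Write the two stars as the families of all sets containing a fixed element $a$, respectively $b$, so that every base of $\mathcal{D}$ (\ie every maximal set of the generating intersecting family $\mathcal{Y}$) contains $a$ or $b$. Let $\mathcal{D}_a := \{S \in \mathcal{D} : a \in S\}$ and $\mathcal{D}_b := \{S \in \mathcal{D} : b \in S\}$ be the two induced stars inside $\mathcal{D}$. Since a star is itself intersecting, it suffices to show that every intersecting family $\mathcal{I} \subseteq \mathcal{D}$ satisfies $|\mathcal{I}| \leq \max(|\mathcal{D}_a|, |\mathcal{D}_b|)$; this is precisely the star property for $\mathcal{D}$, hence Conjecture~\ref{chvatalsconj} for this downset.

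First I would dispose of the degenerate case. If every base of $\mathcal{D}$ contains $a$ (or, symmetrically, every base contains $b$), then the bases share the common element $a$, and the result follows at once from Schönheim's theorem~\cite{schonheim1975hereditary}, which guarantees the star property whenever the bases of a downset have nonempty intersection. So I may assume the genuinely two-star case: there is a base containing $a$ but not $b$ and a base containing $b$ but not $a$, and since $\mathcal{Y}$ is intersecting these two bases meet in some third element.

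The core of the argument is a disjoint pairing of $\mathcal{D}$ adapted to the two stars. Berge's Theorem~\ref{bergthrm} decomposes a downset into pairs of disjoint sets (plus $\emptyset$), and Corollary~\ref{bergecor} already yields $2|\mathcal{I}| \le |\mathcal{D}|$; the point is to refine this so that the larger of the two stars contains exactly one member of each pair, which immediately gives $|\mathcal{I}| \le \max(|\mathcal{D}_a|,|\mathcal{D}_b|)$ because $\mathcal{I}$ meets each disjoint pair in at most one set. For a single base $B$ the natural pairing $S \leftrightarrow B \setminus S$ does the job: the two partners are disjoint subsets of $B$, hence both lie in $\mathcal{D}$, and since $a \in B$ exactly one of them belongs to $\mathcal{D}_a$. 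As a preparatory step I would argue, via an exchange argument, that some maximum intersecting family $\mathcal{I}$ may be taken so that every member meets $\{a,b\}$: any $S \in \mathcal{I}$ with $a,b \notin S$ sits inside a base that meets $\{a,b\}$, so replacing $S$ by a superset obtained by adjoining $a$ or $b$ keeps the family inside $\mathcal{D}$ and intersecting; the precise bookkeeping needed when such a superset is already present requires care.

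The hard part will be the overlap of the two cones: a set of $\mathcal{D}$ can be a subset of several bases, some centered at $a$ and some at $b$, so the local complementation pairings $S \leftrightarrow B \setminus S$ attached to different bases are mutually inconsistent and cannot simply be superimposed into a single involution. Reconciling them into one global disjoint pairing that is dominated by a single star is the combinatorial heart of the Kleitman--Magnanti theorem. I expect to resolve it by induction on the number of bases that do not contain $a$ — the base case being exactly the single-star situation already settled by Schönheim above — peeling off one $b$-only base at a time and controlling how its newly introduced subsets redistribute among the existing pairs. Alternatively, one can phrase the existence of the desired pairing as a defect version of Hall's condition for a bipartite matching between $\mathcal{D}_a \cup \mathcal{D}_b$ and the sets disjoint from them, and verify the matching condition using that any two bases intersect; in either formulation, once the pairing is in hand the bound on $|\mathcal{I}|$, and thus the star property, follows directly.
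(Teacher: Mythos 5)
Your proposal is a plan, not a proof: the step you yourself call ``the combinatorial heart'' --- turning the local complementation pairings $S \leftrightarrow B \setminus S$ attached to individual bases into a single global pairing of $\mathcal{D}$ in which the larger of the two stars $\mathcal{D}_a$, $\mathcal{D}_b$ contains exactly one set from each pair --- is never carried out. You offer two routes you ``expect'' to work (induction on the number of bases avoiding $a$, or a defect Hall argument), but neither is executed, and the induction as sketched meets a concrete obstruction. After peeling off a $b$-only base $B_0$, the set of newly introduced members $\mathcal{D} \setminus \mathcal{D}'$ (where $\mathcal{D}'$ is generated by the remaining bases) is \emph{not} closed under $S \mapsto B_0 \setminus S$: the partner $B_0 \setminus S$ may already lie in $\mathcal{D}'$ as a subset of a different base (e.g.\ $B_0 = \{b,c,d\}$, $B_1 = \{a,c,e\}$, $S = \{b,d\}$ gives $B_0 \setminus S = \{c\} \subseteq B_1$). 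Moreover, even where the new sets can be paired among themselves, those pairs are split by the star at $b$, while the inductive hypothesis may deliver a pairing of $\mathcal{D}'$ split by the star at $a$; dominations by \emph{different} stars do not combine into domination by a single star, which is exactly what your target bound $|\mathcal{I}| \le \max(|\mathcal{D}_a|,|\mathcal{D}_b|)$ requires. The preparatory compression step (pushing every member of $\mathcal{I}$ into $\mathcal{D}_a \cup \mathcal{D}_b$) also leaves its collision bookkeeping unresolved, though that part is standard and fixable.

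For context: the paper does not prove this statement at all; it imports it as a black-box result of Kleitman and Magnanti~\cite{kleitman1972number} and uses it only to justify the variable fixings of Corollary~\ref{fixy12}. So there is no paper proof to compare against. Judged on its own, your attempt correctly reduces the theorem to the quantitative bound $|\mathcal{I}| \le \max(|\mathcal{D}_a|,|\mathcal{D}_b|)$, and correctly settles the one-star case via Sch\"onheim, but the genuinely two-star case --- the actual content of the Kleitman--Magnanti theorem --- remains open in your write-up.
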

As a consequence, $y$-variables for sets with one or two elements can be fixed
to zero in \infprob{n} and \optprob{n}.
\begin{corollary}\label{fixy12}
  Let $n$ be any positive integer. For \infprob{n} and \optprob{n} fix
  $y_S = 0$ for all $S \in 2^{[n]}$ such that $1 \leq |S| \leq
  2$. Then Theorems~\ref{infeastheorem} and~\ref{feasthrm} hold for
  \infprob{n} and \optprob{n}, respectively, with the given fixings.
\end{corollary}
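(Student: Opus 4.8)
The plan is to show that fixing $y_S=0$ for every $S$ with $1\le\Abs{S}\le 2$ is a \emph{valid reduction}, \ie it removes no counterexample, so that both equivalences of Theorems~\ref{infeastheorem} and~\ref{feasthrm} are preserved. Since fixing variables only shrinks the feasible region, one direction of each ``iff'' is immediate: if the original model is infeasible (resp.\ has optimal value zero), then so does the restricted one; for \optprob{n} the all-zero assignment $x=y=0$, $z=0$ still respects the fixing and keeps the restricted objective bounded below by zero. The whole content therefore lies in the reverse direction, namely that whenever a counterexample to Conjecture~\ref{chvatalsconj} exists, the restricted model still detects it.

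The key step is the following claim: in any feasible solution of \infprob{n}, and in any feasible solution of \optprob{n} with positive objective value, the intersecting family $\mathcal Y:=\incset(y)\setminus\{\emptyset\}$ has cardinality strictly larger than every star of $\mathcal D:=\incset(x)$ (directly from \eqref{eq:star}, respectively from \eqref{eq:star2} together with $\Abs{\mathcal Y}-z\ge 1$), and such a $\mathcal Y$ can contain no set of size one or two. I would prove this in two cases. If $\{i\}\in\mathcal Y$, then the intersecting property forces $i\in S$ for every $S\in\mathcal Y$, so $\mathcal Y$ is contained in the star of $i$ inside $\mathcal D$, whence $\Abs{\mathcal Y}$ cannot exceed that star---a contradiction. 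If $\{i,j\}\in\mathcal Y$, then every $S\in\mathcal Y$ meets $\{i,j\}$, so $\mathcal Y$ lies in the union of the two stars of $i$ and $j$; Theorem~\ref{fixonetwo} then guarantees that the downset $\mathcal D_{\mathcal Y}$ generated by $\mathcal Y$ satisfies Chvátal's conjecture, so $\Abs{\mathcal Y}$ is at most the size of the largest star in $\mathcal D_{\mathcal Y}$.

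To close the size-two case I would use the inclusion $\mathcal D_{\mathcal Y}\subseteq\mathcal D$: for \infprob{n} this holds because $\mathcal D$ is a downset containing $\mathcal Y$ by \eqref{eq:ideal} and \eqref{eq:contain}, and for \optprob{n} it follows from the generation inequalities \eqref{eq:gen}. Passing to a sub-downset can only shrink each star, so the largest star in $\mathcal D_{\mathcal Y}$ is bounded by the largest star in $\mathcal D$, contradicting $\Abs{\mathcal Y}>$ every star of $\mathcal D$. Consequently every counterexample-bearing solution already satisfies $y_S=0$ for all $\Abs{S}\in\{1,2\}$, hence is feasible for the restricted model with unchanged objective; this supplies the reverse directions and completes both equivalences.

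The step I expect to be the main obstacle is the size-two case. It is the only place where the structural input of Theorem~\ref{fixonetwo} is genuinely needed rather than a one-line intersecting argument, and one must be careful to bound $\Abs{\mathcal Y}$ against the stars of the \emph{generated} downset $\mathcal D_{\mathcal Y}$ and only then transfer the bound to the actual downset $\mathcal D$ using monotonicity of star sizes under $\mathcal D_{\mathcal Y}\subseteq\mathcal D$. The singleton case and the two ``restriction'' directions are routine by comparison.
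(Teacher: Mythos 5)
Your proposal is correct and follows essentially the same route as the paper's proof: a case analysis on whether $\mathcal Y$ contains a singleton or a pair, with the pair case settled by Theorem~\ref{fixonetwo}, so that no counterexample is excluded by the fixings. The only difference is one of rigor, not of method: the paper states tersely that such families ``do not amount to counterexamples,'' while you make explicit the step it glosses over, namely bounding $\Abs{\mathcal Y}$ by the largest star of the \emph{generated} downset $\mathcal D_{\mathcal Y}$ and then transferring that bound to $\mathcal D$ via $\mathcal D_{\mathcal Y}\subseteq\mathcal D$ and monotonicity of star sizes.
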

\begin{proof}
  Suppose $y$ stems from a solution of \infprob{n} or \optprob{n}, then it
  encodes an intersecting family $\mathcal Y := \incset(y) \setminus
  \{\emptyset\}$.
  If $\{i\} \in \mathcal Y$, then $\mathcal Y$ is a star centered around $i$.
  If $\{i,j\} \in \mathcal Y$, then $\mathcal Y$ is the union of two stars
  centered around $i$ and $j$, respectively. By Theorem~\ref{fixonetwo}, these
  do not amount to counterexamples to Chvátal's conjecture and can safely be
  excluded from the formulations without changing the feasibility status of
  \infprob{n} and the optimal objective value of \optprob{n}, respectively.
\end{proof}

Theorem \ref{fixonetwo} can also be exploited for a short proof, that the conjecture holds 
for all ground sets of size less or equal than $5$.

\begin{proposition}\label{3sets}
  Conjecture~\ref{chvatalsconj} holds for all downsets $\mathcal{D}$
  such that $|U(\mathcal{D})| \leq 5  $.
\end{proposition}
\begin{proof}
  %% {@Leon Shouldn't \cite{sterboul1974conjecture} be cited in this proof? \\
  %% @Ambros: No this is fine. We cite Maganti (thm 5) for the union of 2 stars result. We don't use the fact that it holds for 3-sets here.}
  Consider an intersecting family $\mathcal{Y}$ such that $|U(\mathcal{Y})| \leq 5$,
  \wlogL $\mathcal{Y} \subseteq 2^{[5]}$.
  All $10$ sets of size $3$ have to be part of the intersecting family.
  If this is not the case, then $\mathcal Y$ is contained in the union of the
  stars of the remaining two elements and the conjecture holds by Theorem \ref{fixonetwo}.
  The maximal star, containing only elements of size $1,2,3$ has size~$11$. 

  Let us now consider sets of size $4$. There are $k$ sets of size $4$ in $\mathcal Y$ with 
  $0 \le k \le \binom{5}{4}$. For any star, there exists at most one set of size $4$ 
  that does not contain the common element. 
  Therefore the size of the largest intersecting family is at most $10+k$, whereas the size
  of the largest star is $11+k-1 = 10+k$.
  
  Since the whole ground set can not be part of the intersecting family, this concludes the proof.
\end{proof}

Variable fixings are certainly the most effective improvements to the problem
formulation, since they directly reduce the problem size as opposed to general
valid inequalities that increase the number of constraints.
If we know that Conjecture~\ref{chvatalsconj} holds for all downsets
$\mathcal{D}$ such that $|U(\mathcal{D})| \leq n$ for some fixed $n$, then we
can use a simple variable fixing scheme for the case when $|U(\mathcal{D})| = n
+1$, as follows.

\begin{proposition}\label{fixx1}
  Let $n$ be a fixed positive integer. Suppose \infprob{n} is infeasible
  and the objective function value of an optimal solution of
  \optprob{n} is zero for all positive integers $n_0 < n$. Fix
  $x_{S} =1$ for all $S\in 2^{[n]}$ such that $|S| = 1$. Then
  Theorems~\ref{infeastheorem} and~\ref{feasthrm} hold for \infprob{n} and
  \optprob{n}, respectively, with the given fixings.
\end{proposition}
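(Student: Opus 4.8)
The plan is to show that the fixings $x_{\{i\}} = 1$, imposed over all singletons, discard no counterexample, so that neither the feasibility status of \infprob{n} nor the optimal value of \optprob{n} changes. Read through Theorems~\ref{infeastheorem} and~\ref{feasthrm}, the hypothesis says precisely that Chvátal's conjecture holds for every ground set of size strictly below $n$. Everything then rests on a single claim: any counterexample realized in $2^{[n]}$ automatically has $x_{\{i\}} = 1$ for every $i \in [n]$.

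First I would prove this claim. Let $\mathcal{D}$ be a counterexample downset together with an intersecting family $\mathcal{Y} \subseteq \mathcal{D}$ exceeding every star of $\mathcal{D}$. If some $i \in [n]$ belonged to no member of $\mathcal{D}$, then $\mathcal{D}$ and $\mathcal{Y}$ would be supported entirely on $U(\mathcal{D})$, a ground set of size at most $n-1$; relabelling $U(\mathcal{D})$ would then exhibit a counterexample below $n$, contradicting the hypothesis. Hence $U(\mathcal{D}) = [n]$, and since every $i$ is covered by some set of $\mathcal{D}$, the downset property---\eqref{eq:ideal} in \infprob{n} and \eqref{eq:gen} in \optprob{n}---forces $\{i\} \in \mathcal{D}$, \ie $x_{\{i\}} = 1$.

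Next I would transfer the claim to each formulation. For \infprob{n} the fixings only tighten the program, so the fixed model is feasible only if the original is; conversely, any feasible point of the original encodes a counterexample, which by the claim already satisfies the fixings and hence survives. Feasibility is preserved in both directions and Theorem~\ref{infeastheorem} holds verbatim. For \optprob{n} I would first check that objective value zero remains attainable after fixing: taking $\mathcal{Y}$ to be the full star about one element generates, through \eqref{eq:gen}, the entire power set, which makes every $x_{\{i\}} = 1$ while leaving the objective at zero, so the fixed optimum is at least zero. If the conjecture holds for size $n$, the unfixed optimum is already zero and fixing can only lower it, giving zero; if it fails, the positive-objective witness has every singleton present by the claim and therefore stays feasible, keeping the fixed optimum positive. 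Thus the fixed optimum is zero exactly when the conjecture holds, which is Theorem~\ref{feasthrm}.

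The step demanding the most care is the reduction inside the claim: I must make sure that deleting an unused element yields a bona fide counterexample over a ground set of size at most $n-1$, so that the inductive hypothesis genuinely applies. This turns out to be reassuring rather than hard, since $\mathcal{D}$ and $\mathcal{Y}$ are literally unchanged by the deletion---no set of either contains the removed element---so both the downset and intersecting properties persist and the maximal star size is unaffected, a star centered at the deleted element being empty in any case. The strict inequality certifying the counterexample therefore carries over unchanged, and the inductive hypothesis is contradicted as required.
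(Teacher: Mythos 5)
Your proof is correct and follows essentially the same route as the paper: your key claim---that under the inductive hypothesis every counterexample must have $U(\mathcal{D}) = [n]$ and hence, by the downset property, contain every singleton---is exactly the contrapositive of the paper's observation that any solution with $x_{\{i\}} = 0$ encodes a downset with $|U(\mathcal{D})| < n$ and so cannot be a counterexample. You simply spell out the transfer to \infprob{n} and \optprob{n} (including attainability of objective value zero after the fixing) in more detail than the paper's terser argument.
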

\begin{proof}
  Consider the $x$-vector from a solution of \infprob{n} or \optprob{n} and
  suppose that $x_{\{i\}} = 0$ for some element $i \in [n]$.
  As in the proofs of Theorems~\ref{infeastheorem} and~\ref{feasthrm}, we may
  assume that $x$ encodes a downset $\mathcal D := \incset(x)$ and $\mathcal Y
  := \incset(y) \subseteq \mathcal D$ is an intersecting family.
  By the downset property, $x_S = 0$ for all $S \ni i$.
  But then $|U(\mathcal D)| < n$ and by assumption the solution is not a
  counterexample to Conjecture~\ref{chvatalsconj}.
  Hence, we may fix $x_{S} = 1$ for all $S\in 2^{[n]}$ such that $|S| = 1$.
\end{proof}

We can apply this proposition incrementally by starting from the case $n=6$, with all 
$x$-variables for sets that contain exactly one element fixed to $1$. After solving,
we increase $n$ by one and repeat the procedure.

Finally, we
discuss how to exploit the fundamental result of~\cite{sterboul1974conjecture} as
an additional fixing scheme.

\begin{proposition}\label{fixx2}
  Let $n \geq 6$ be a fixed integer. In \infprob{n} and \optprob{n} fix $x_{S} =
  1$ for all $S\in 2^{[4]}$.  Then Theorems~\ref{infeastheorem}
  and~\ref{feasthrm} hold, respectively, with the given fixings.
\end{proposition}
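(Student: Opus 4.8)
The plan is to follow the same template as Proposition~\ref{fixx1}, replacing the appeal to smaller ground sets by the result of~\cite{sterboul1974conjecture}. The two forward implications are immediate: the fixings only restrict the feasible region, so if \infprob{n} is infeasible then the fixed version is too, and the optimal value of the fixed \optprob{n} cannot exceed that of the unfixed one. For the \optprob{n} direction I would additionally note that the fixed problem still admits a solution of objective value zero---for instance the downset $\mathcal{D} = 2^{[4]}$ together with a maximum star in it and the corresponding $z$---so its optimum remains exactly zero whenever the conjecture holds. Hence everything reduces to the converse directions, namely showing that the fixings discard no genuine counterexample.

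So suppose Chvátal's conjecture fails for some $\mathcal{D}$ with $|U(\mathcal{D})| \leq n$, witnessed by an intersecting family $\mathcal{Y} \subseteq \mathcal{D}$ larger than every star. The first key step is to observe that $\mathcal{D}$ must contain a set of size at least four: otherwise every set of $\mathcal{D}$ would have at most three elements, and~\cite{sterboul1974conjecture} would force $\mathcal{D}$ to satisfy the conjecture, a contradiction. Fixing such a set $A \in \mathcal{D}$ with $|A| \geq 4$ and any four-element subset $B \subseteq A$, the downset property gives $2^{B} \subseteq \mathcal{D}$.

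The second step is to use the invariance of \infprob{n} and \optprob{n} under relabeling of the ground set. Any permutation $\pi$ of $[n]$ induces a bijection of $2^{[n]}$ preserving the subset relation, disjointness and cardinalities, and therefore maps each constraint of~\eqref{eq:ideal}--\eqref{eq:star} and of~\eqref{eq:inters2}--\eqref{eq:gen} to a constraint of the same class while leaving the objective unchanged; in particular it carries feasible solutions to feasible solutions and counterexamples to counterexamples. Choosing $\pi$ with $\pi(B) = [4]$, the relabeled downset contains $2^{[4]}$, so its incidence vector satisfies $x_S = 1$ for all $S \in 2^{[4]}$. Taking the associated $y$ (and, for \optprob{n}, $z$ equal to the largest star cardinality) then yields a feasible point of the fixed formulation: it is feasible for \infprob{n} by construction, and for \optprob{n} it has objective $|\mathcal{Y}| - z \geq 1$ since $\mathcal{Y}$ beats every star. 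This shows the fixed \infprob{n} is feasible and the fixed \optprob{n} has positive optimum whenever a counterexample exists, which is exactly the contrapositive needed.

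I expect the only delicate point to be the relabeling step, because the fixings single out the specific coordinates indexed by $2^{[4]}$: one must check carefully that $\pi$ maps each constraint class to itself so that feasibility is genuinely preserved. This verification is routine, as every constraint is indexed purely by subset/disjointness data that $\pi$ respects; granting it, the combinatorial content---that a counterexample must expose a four-element set, provided by~\cite{sterboul1974conjecture}---carries the proof.
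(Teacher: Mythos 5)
Your proposal is correct and follows essentially the same route as the paper's proof: invoke the result of~\cite{sterboul1974conjecture} to conclude that any counterexample downset must contain a set of size four, then permute the ground set so that this set becomes $\{1,2,3,4\}$, whence the fixings $x_S=1$ for $S\in 2^{[4]}$ exclude no counterexample. The paper states this in two sentences; you merely spell out the details (the downset closure $2^B\subseteq\mathcal{D}$, the permutation-invariance of the constraint classes, and the preservation of the zero optimum for \optprob{n}), all of which are correct.
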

\begin{proof}
  According to \cite{sterboul1974conjecture}, counterexamples to Chvátal's
  conjecture must feature a downset that contains at least one set of size four.
  By permuting the elements in $[n]$ suitably, we can always ensure that this
  set is \{1,2,3,4\}.
\end{proof}

To summarize, we arrive at the following improved formulation $\redprob{n}$,
\begin{subequations}
  \begin{align}
    \max  \sum_{S \in 2^{[n]}\setminus \{\emptyset\} }& y_{S} - z  \\
    y_{T} + y_{S} &\leq 1
                  && {\forall T \in 2^{[n]} \setminus \{\emptyset\}, \forall S \in 2^{[n]} \setminus \{\emptyset\}: T \cap S = \emptyset},  \label{eq:inters3}\\
    \sum_{S \in 2^{[n]}: i \in S }x_{S} &\leq z
                  && {\forall i \in [n]},  \label{eq:star3} \\
    y_T &\leq x_{S} &&  {\forall T \in 2^{[n]}, \forall S \in 2^{[n]}: S \subseteq T}, \label{eq:gen3}\\
    \sum_{S \in 2^{[n]}\setminus \{\emptyset\}}2y_{S} &\leq \sum_{S \in 2^{[n]}}x_{S}, \label{eq:bergecut}\\
    y_S &= 0       && \forall S \in 2^{[n]}: 1 \leq |S| \leq 2, \label{eq:fixy12}\\
    x_S &= 1       && \forall S \in 2^{[n]}: |S| = 1, \label{eq:fixx1}\\
    x_S &= 1       && \forall S\subseteq [4], \label{eq:fixx2}\\
    x_S, y_S &\in \{0,1\}
                  && \forall S\in 2^{[n]}, \notag\\
    z &\in \mathbb{Z}_{\geq 0}. \notag
  \end{align}
\end{subequations}

This formulation serves as the basis for our proof of Theorem~\ref{7result} that
uses the following equivalence incrementally for $n = 6$ and $n = 7$.
We summarize our reductions in the following theorem.

\begin{theorem}\label{redthrm}
  Let $n$ be a positive integer and suppose Chvátal's conjecture holds for all
  downsets $\mathcal{D}$ such that $|U(\mathcal{D})|\leq n-1$.
  Then all downsets $\mathcal{D}$ such that $|U(\mathcal{D})|\leq n$ satisfy
  Chvátal's conjecture if and only if the objective function value of an optimal
  solution of $\redprob{n}$ is zero.
\end{theorem}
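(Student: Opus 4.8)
The plan is to obtain Theorem~\ref{redthrm} as a consequence of Theorem~\ref{feasthrm} by checking that none of the four features separating $\redprob{n}$ from \optprob{n} --- the Berge cut~\eqref{eq:bergecut} and the three fixings~\eqref{eq:fixy12}, \eqref{eq:fixx1}, \eqref{eq:fixx2} --- disturbs the equivalence ``optimal value zero $\iff$ Chvátal's conjecture holds for $|U(\mathcal{D})|\le n$''. Each feature has already been certified in isolation: Corollary~\ref{bergecut} for the cut, Corollary~\ref{fixy12} for the small $y$-variables, Proposition~\ref{fixx1} for the singleton $x$-variables, and Proposition~\ref{fixx2} for fixing $x_S=1$ on $2^{[4]}$. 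The first step is therefore to confirm that the standing hypothesis of Theorem~\ref{redthrm} discharges the premises of these auxiliary results. The only substantive premise is that of Proposition~\ref{fixx1}, which asks that \infprob{n_0} be infeasible and \optprob{n_0} have optimal value zero for every $n_0<n$; by Theorems~\ref{infeastheorem} and~\ref{feasthrm} this is exactly the assumed validity of Chvátal's conjecture for all downsets with $|U(\mathcal{D})|\le n-1$. The condition $n\ge 4$ required by Proposition~\ref{fixx2} is built into the well-definedness of~\eqref{eq:fixx2} and holds throughout the intended applications $n=5,6,7$.

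For the forward direction I would assume the conjecture for all $|U(\mathcal{D})|\le n$ and bound the optimum of $\redprob{n}$ both above and below. For the upper bound, take any feasible $(x,y,z)$ and set $\mathcal Y:=\incset(y)\setminus\{\emptyset\}$, an intersecting family by~\eqref{eq:inters3}. Constraint~\eqref{eq:gen3} forces every subset of a member of $\mathcal Y$ into $\incset(x)$, so the downset $\hat{\mathcal D}$ generated by $\mathcal Y$ satisfies $\hat{\mathcal D}\subseteq\incset(x)$ and $|U(\hat{\mathcal D})|\le n$. The assumed conjecture gives that $|\mathcal Y|$ is at most the size of the largest star in $\hat{\mathcal D}$, which is at most $\max_i\sum_{S\ni i}x_S\le z$; hence the objective $|\mathcal Y|-z$ is nonpositive. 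The delicate half is the matching lower bound: I must produce a single feasible solution of value zero respecting all three fixings simultaneously. Here I would take a maximum intersecting family $\mathcal Y^\star$ of cardinality $2^{n-1}$ that contains no set of size at most two --- for odd $n$ all sets larger than $n/2$, and for even $n$ these together with the half-size sets containing element~$1$ --- set $x\equiv 1$, let $y$ be the incidence vector of $\mathcal Y^\star$, and put $z=2^{n-1}$. Then $\sum_S y_S=2^{n-1}=z$ gives objective zero, and one checks directly that~\eqref{eq:inters3}--\eqref{eq:gen3} hold, that the Berge cut~\eqref{eq:bergecut} is tight, and that all fixings are met.

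For the reverse direction I would argue the contrapositive, showing that a counterexample with $|U(\mathcal{D})|\le n$ can always be encoded as a feasible point of $\redprob{n}$ of objective at least one. If $\mathcal D$ is such a counterexample, then $|U(\mathcal D)|=n$ --- otherwise the hypothesis on $n-1$ excludes it --- so all singletons lie in $\mathcal D$ and~\eqref{eq:fixx1} is respected; Sterboul's theorem combined with a relabeling of $[n]$ places $2^{[4]}\subseteq\mathcal D$, validating~\eqref{eq:fixx2}; Theorem~\ref{fixonetwo} lets us assume the witnessing family $\mathcal Y$ omits all sets of size one and two, validating~\eqref{eq:fixy12}; and the Berge cut holds unconditionally by Corollary~\ref{bergecor}. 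Encoding $\mathcal D$, $\mathcal Y$, and $z$ equal to the size of the largest star in $\mathcal D$ exactly as in the proof of Theorem~\ref{feasthrm} yields objective value at least one, so a zero optimum rules out any such counterexample.

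The main obstacle I anticipate is not any individual implication but the joint consistency of the four reductions, concretely the survival of the value-zero lower bound once every fixing is imposed at once. This is precisely where the explicit construction of a maximum intersecting family free of small sets is indispensable, and it is also where the range of $n$ matters: such a family of full size $2^{n-1}$ exists only for $n\ge 5$, which is exactly the regime in which $\redprob{n}$ is invoked (the cases $|U(\mathcal{D})|\le 4$ being already settled by Proposition~\ref{3sets}). Verifying this existence, together with the tightness of the Berge cut it entails, is the one genuinely combinatorial ingredient; the remaining work is the careful reassembly of the already-established auxiliary lemmas.
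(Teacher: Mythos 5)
Your proposal follows the same decomposition as the paper: Theorem~\ref{redthrm} is obtained from Theorem~\ref{feasthrm} by invoking Corollary~\ref{bergecut}, Corollary~\ref{fixy12}, Proposition~\ref{fixx1}, and Proposition~\ref{fixx2} to show that the cut~\eqref{eq:bergecut} and the three fixings~\eqref{eq:fixy12}, \eqref{eq:fixx1}, \eqref{eq:fixx2} preserve the equivalence. The paper's own proof is exactly this four-sentence assembly and nothing more; in particular it only argues the direction that matters for Theorem~\ref{7result}, namely that no counterexample is cut off by the reductions (so value zero implies the conjecture).

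Where you go beyond the paper is the converse direction: you explicitly exhibit a feasible solution of $\redprob{n}$ with objective value zero under \emph{all} fixings simultaneously, by taking $x\equiv 1$, $z=2^{n-1}$, and $y$ the incidence vector of a maximum intersecting family of size $2^{n-1}$ containing no set of size at most two (all sets of size greater than $n/2$, padded for even $n$ with the half-size sets through a fixed element). This step is genuinely needed and genuinely absent from the paper: the value-zero witnesses used in the proofs of Theorem~\ref{feasthrm} and Propositions~\ref{fixx1}, \ref{fixx2} are stars, which contain singletons and pairs and are therefore killed by~\eqref{eq:fixy12}, so the auxiliary results do not combine automatically on this side of the equivalence. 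Your construction also pinpoints why the statement's hypothesis ``$n$ a positive integer'' is too generous: for $n=4$ the fixings force $z\geq 8$ while the largest intersecting family in $2^{[4]}$ avoiding small sets has only $5$ members (the four triples and $[4]$), so the optimum of $\redprob{4}$ is $-3$ even though the conjecture holds for $|U(\mathcal{D})|\leq 4$ by Proposition~\ref{3sets}. Thus the equivalence as literally stated fails at $n=4$ and is correct precisely in the regime $n\geq 5$ in which the paper applies it. In short: your proof is correct, shares the paper's skeleton, and in addition repairs a gap (and an implicit side condition) that the paper's terse proof passes over.
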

\begin{proof}
  As follows from Corollary~\ref{bergecut}, constraint~\eqref{eq:bergecut} is a
  valid inequality for \optprob{n}.
  Corollary~\ref{fixy12} shows that
  constraints~\eqref{eq:fixy12} do not exclude any counterexamples that may have
  objective function value greater than zero.
  According to Proposition~\ref{fixx1}, the same holds for
  constraints~\eqref{eq:fixx1} under the assumption that Chvátal's conjecture is
  correct for smaller ground sets.
  According to Proposition~\ref{fixx2}, constraints~\eqref{eq:fixx2} may exclude
  counterexamples, but only as long as at least one symmetric counterexample
  remains feasible.
\end{proof}

\section{Computational Results}
\label{experimentalResults}

Using the safe computational framework outlined in Section~\ref{masIP}, we could
solve \optprob{n} and $\redprob{n}$ for $n=5,6$, and $7$, producing a
machine-assisted proof of Theorem~\ref{7result}.
Furthermore, several floating-point MIP solvers could solve $\redprob{8}$ to
optimality.
Although this does not constitute a safe proof, it makes it highly likely that
Chvátal's conjecture holds for $n = 8$ and a search for counterexamples should
focus on larger ground sets.

Beyond the plain question of solvability, in this section we provide details
regarding the following questions: \emph{What are the times spent for solving
  the integer programs and how are they affected by the improvements in the
  formulations?  How large are the resulting certificates and how expensive is
  their verification?  How does the performance of the exact framework compare
  to the performance of standard floating-point MIP solvers?}

The results for the optimality-based formulations are provided in
Table~\ref{res-table}.
All tests were run on a cluster of computing nodes with Intel Xeon Gold 5122
CPUs with 3.6\,GHz and 96\,GB of main memory.
The exact version of SCIP was built with CPLEX 12.6.3 as floating-point LP
solver and QSopt\_ex 2.5.10~\cite{APPLEGATE2007} for exact rational LP solves.
As floating-point MIP solver, we used
SCIP 6.0.0~\cite{GleixnerBastubbeEifleretal.2018}, built with CPLEX 12.8.0 as
the underlying LP solver.
The time limit was set to 12 hours for all runs and on each computing node only
one job was executed at a time.

\begin{table}[ht]
  \caption{Computational details for solving the Chvátal IPs for the two
    formulations \optprob{n} and $\redprob{n}$.  The sizes reported for the VIPR
    certificates are for uncompressed text files.  The running time for input verification is negligible and always below 5~seconds.}
    \label{res-table}
    \begin{tabular*}{0.9\textwidth}{l@{\quad\extracolsep{\fill}}rrrrrrr}
    \toprule
       &     &       &       & SCIP 6.0.0 & SCIP exact & \multicolumn{2}{c}{VIPR} \\
    \cmidrule(l){5-5}\cmidrule(l){6-6}\cmidrule(l){7-8}
    IP & $n$ & \#vars & \#ineqs & time [s]   & time [s] & size [MB] & time [s] \\
    \midrule
    \midrule
    \optprob{n}
    & 5       & 63    & 427   & 0.2        & 0.5     & 0.5      & 0.07       \\
    & 6       & 127   & 1336  & 2.3        & 22.6    & 73       & 4.6        \\
    & 7       & 255   & 4125  & 91.0       & 4024.2  & 21000    & 1258.3     \\
    & 8       & 511   & 12618 & --         & --      & --       & --         \\
    \midrule
    $\redprob{n}$
    & 5       & 31    & 433   & 0.1        & 0.1     & 0.018    & 0.005      \\
    & 6       & 88    & 1317  & 0.1        & 0.2     & 0.25     & 0.2       \\
    & 7       & 208   & 4050  & 13.5       & 124.5   & 163      & 28.9       \\
    & 8       & 455   & 12424 & 7278.9     & --      & --       & --         \\
    \bottomrule
  \end{tabular*}
\end{table}

First, we observe the effectiveness of the additional inequalities and fixings
applied in $\redprob{n}$.
The running times of exact SCIP are significantly reduced, as are the sizes and
verification times for the VIPR certificates.
Furthermore, only $\redprob{8}$ can be solved by floating-point SCIP, while it
times out for \optprob{8}.
Second, note that the size of the IPs is not large compared to what MIP solvers
today can often handle easily in many industrial applications.
This underlines the difficulty of the underlying combinatorial question.
Third, the sizes and running times of checking the VIPR certificate are
significant but do not constitute a bottleneck for the current framework.
Note that the times for input verification in Coq are not reported, because they
are negligible and always below 5~seconds.

% Ambros: I removed this for now; let's wait whether reviewers ask for it
%% In contrast, our initial experiments with propositional satisfiability
%% (SAT) formulations that used MiniSat+~\cite{een2006translating} to
%% obtain a SAT encoding for the IP formulation \infprob{n} in
%% Section~\ref{infeasibilityproblem} failed to obtain competitive
%% results.  Even when employing current SAT solvers such as
%% Lingeling~\cite{Biere-SAT-Competition-2016-solvers}, \emph{any}
%% instance for $|U(\mathcal{D})| > 5$ failed to solve within a 24~hour
%% time limit.

\section{Conclusion}\label{conclusion}

The goal of this paper was to bridge a gap that currently divides theory and
practice of integer programming.
In theory, the integer programming paradigm holds the promise of globally
optimal, proven results.
In practice, however, there is no established computational framework that
rigorously safeguards results that are obtained with integer programming
software against numerical and programming errors.

Bridging this gap necessarily involved theoretical and practical aspects.
On the one hand, we developed non-trivial integer programming formulations for
an exemplary application: Chvátal's conjecture, a long-standing open question in
extremal combinatorics, where even low-dimensional cases are unanswered.
One advantage of this approach was the flexibility of the IP formulations to
include partial results from the literature by valid inequalities and variable
fixings.
Solving these formulations, we could show that counterexamples to the conjecture
do not seem to exist for the cases $|U(\mathcal{D})| = 6$, $7$, and $8$, which
were previously open.

On the other hand, we had to develop and combine mathematical software in order
to equip these results with the level of numerical rigor and independent
verifiability that is required by computational proofs of mathematical
theorems.
Such verifiable computer proofs could be produced for the cases
$|U(\mathcal{D})| = 6$ and ~$7$.
A proof for the case $|U(\mathcal{D})| = 8$ is tangible if the performance of
the exact rational solver is enhanced in the future.
Hence, the results also motivate sustained work on closing the current
performance gap between exact and inexact MIP solvers.

We hope that the generality of the computational framework
presented makes it useful for the investigation of other open questions in
extremal combinatorics and beyond.
Most directly, our IP models could be appropriately modified to investigate
variations on Chvátal's conjecture such as proposed by Snevily~\cite{SnevilyWeb}
or a generalization of Chvátal's conjecture proposed by
Borg~\cite{borg2011chvatal}.

\bibliography{sample}

\end{document}